\declaretheorem[numberwithin=section]{theorem}  
\declaretheorem[sibling=theorem]{corollary}
\declaretheorem[sibling=theorem]{lemma}
\declaretheorem[sibling=theorem]{proposition}
\declaretheorem[sibling=theorem]{definition}
\declaretheorem[sibling=theorem]{remark}
\DeclareMathOperator\coker{coker}
\DeclareMathOperator\codim{codim}
\DeclareMathOperator\shom{\underline{Hom}}
\newcommand\tensor{\otimes}
\newcommand\dualab\hat
\newcommand\sh\mathscr
\newcounter{proofstep}
\xpretocmd{\proof}{\setcounter{proofstep}{0}}{}{}
\newcommand{\proofstep}[1]{%
  \par
  \addvspace{\medskipamount}%
  \stepcounter{proofstep}%
  \noindent\emph{Step \theproofstep: #1}\par\nobreak\smallskip
  \@afterheading
}
\tikzset{symbol/.style={draw=none,every to/.append style={edge node={node [sloped, allow upside down, auto=false]{$#1$}}}}}
\title[Chen-Jiang decompositions without Hodge modules]{Chen-Jiang decompositions for projective varieties, without Hodge modules}
\newcommand\vanloc{V}
\newcommand\fmt{\mathsf{FM}}
\newcommand\relgx{\Psi_{X}}
\newcommand\relgxz{\Psi_{X/Z}}
\newcommand\relgxy{\Psi_{X/Y}}
\newcommand\relgxyq{\Psi'_{X/Y}}
\newcommand\relgxzq{\Psi'_{X/Z}}
\newcommand\fibreXy{F}
\newcommand\fibreXz{G}
\newcommand\fibreYz{H}
\newcommand\relgvhs\Phi
\author{Mads Bach Villadsen}
\date{}
\begin{document}

\maketitle
\begin{abstract}
We give a new proof of a theorem by Pareschi, Popa and Schnell that the direct image of the canonical bundle of a smooth projective variety along a morphism to an abelian variety admits a Chen-Jiang decomposition, without using the theory of Hodge modules.
\end{abstract}

\setcounter{tocdepth}{1}
\tableofcontents

\section{Introduction}
\label{sec:orga5edd2c}
Given a morphism \(f\colon X\to A\) from a smooth projective variety over \(\mathbb{C}\) to an abelian variety, the direct image \(f_{\ast}\omega_X\) is known by work of Green and Lazarsfeld \cite{Green1987} to be a GV-sheaf, that is, the cohomology support locus
\begin{align*}
\vanloc^k(A,f_{\ast}\omega_X)=\{\alpha\in\dualab A\mid H^k(A,f_{\ast}\omega_X\otimes\alpha)\ne 0\}
\end{align*}
has codimension at least \(k\) in the dual abelian variety \(\dualab A\) for each \(k\ge 0\). Moreover, the precise structure of these loci is well understood: The components of \(V^k(A,f_{\ast}\omega_X)\) are translates of abelian subvarieties by \cite{Green1991}, and are in fact translates by points of finite order by work of Simpson \cite{Simpson1993}. We recall these results more precisely in Section \ref{sec:org893a4b0}.

In the case where \(f\) is generically finite, Chen and Jiang \cite{Chen2018} prove a semi-positivity result for \(f_{\ast}\omega_X\) corresponding to the structure of the cohomology support loci. Namely, they prove that there exists a decomposition
\begin{align*}
f_{\ast}\omega_X\cong\bigoplus_i \alpha_i\otimes p_i^{\ast}\sh F_i,
\end{align*}
since called a Chen-Jiang decomposition in \cite{Pareschi2015}, where each \(\alpha_i\in\dualab A\) is a point of finite order, \(p_i\colon A\to A_i\) is a surjective homomorphism of abelian varieties with connected fibres, and each \(\sh F_i\) is an M-regular coherent sheaf on \(A_i\), i.e. for each \(k>0\) we have \(\codim_{\dualab A_i}\vanloc^k(A_i,\sh F_i)>k\). The dual of each \(p_i\) is an inclusion \(\dualab{p_i}\colon\dualab A_i\to\dualab A\), and the codimension \(k\) components of \(V^k(A,f_{\ast}\omega_X)\) for each \(k\) are exactly the translates by \(\alpha_i\) of \(\dualab A_i\), so the failure of \(f_{\ast}\omega_X\) itself to be M-regular is explained by this decomposition. The proof by Chen and Jiang relies on the structural results on \(\vanloc^k(A,f_{\ast}\omega_X)\), but is otherwise algebraic in nature.

Using Hodge modules, this theorem was widely generalized by Pareschi, Popa and Schnell \cite{Pareschi2015}. They prove a Chen-Jiang decomposition result for the associated graded pieces of the Hodge filtration on any polarizable real Hodge module on a compact complex torus. Since direct images of canonical bundles arise in this way, the result of Chen-Jiang is thus extended to arbitrary morphisms (and even to the Kähler setting).

Building on this result, Lombardi, Popa and Schnell \cite{Lombardi2017} prove that direct images of pluricanonical bundles of smooth projective varieties likewise admit Chen-Jiang decompositions, by showing that for \(f\colon X\to A\) and any \(m\ge 2\), there exists a smooth projective variety \(X_{m}\) and a morphism \(f_{m}\colon X_{m}\to A\) such that \(f_{\ast}\omega_X^{\otimes m}\) is a direct summand in \(f_{m\ast}\omega_{X_{m}}\). Jiang and Meng \cite{Jiang2020,Meng2020} independently give Chen-Jiang decompositions for direct images of line bundles equivalent to a rational multiple of the canonical divisor of a klt pair. These results again have applications to the birational theory of irregular varieties.

The proof in \cite{Pareschi2015} relies heavily on Hodge modules and the decomposition theorem, but in the geometric case where \(f\colon X\to A\) is a morphism from a smooth projective variety to an abelian variety, it is reasonable to expect a more direct proof along the lines of the original work by Chen-Jiang. We give such a proof (Section \ref{sec:org337de26}), relying only on the theory of variations of Hodge structure, removing the dependence of the previously mentioned results on Hodge modules.

\begin{restatable}{mainthm}{firstthmA}
For any morphism \(f\colon X\to A\) from a smooth projective variety to an abelian variety, the sheaf \(f_{\ast}\omega_X\) admits a Chen-Jiang decomposition.
\label{thm:A}
\end{restatable}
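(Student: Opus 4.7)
The plan is to induct on $\dim X$ and reduce to the generically finite case, where the theorem is due to Chen and Jiang. The inductive step replaces Hodge modules by the classical theory of variations of Hodge structure, together with the Fourier-Mukai transform $\fmt$ and the Green-Lazarsfeld and Simpson results recalled above.

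First, I would factor $f$. Let $Y = f(X) \subset A$, take a resolution $Z \to Y$, and after a birational modification of $X$ obtain a factorization $f\colon X \xrightarrow{h} Z \xrightarrow{g} A$ in which $h$ is a fibration with connected fibres between smooth projective varieties and $g$ is generically finite onto its image. Since $f_{\ast}\omega_X = g_{\ast}(h_{\ast}\omega_X)$, it suffices to produce a Chen-Jiang-type decomposition of $h_{\ast}\omega_X$ on $Z$ relative to $g$, and then to show that such decompositions descend along the generically finite pushforward $g_{\ast}$; the descent is handled by applying the Chen-Jiang theorem in the generically finite case to $g_{\ast}\omega_Z$ and its relevant twists.

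Second, I would analyse $h_{\ast}\omega_X$ via VHS. Over the smooth locus $U\subset Z$ of $h$, Koll\'ar's theorem identifies $h_{\ast}\omega_{X/Z}|_U$ with the bottom Hodge bundle of the polarizable VHS on $R^d h_{\ast}\mathbb{C}|_U$, where $d$ is the relative dimension, and $h_{\ast}\omega_{X/Z}$ is its unique coherent extension across $Z\setminus U$. After passing to a suitable \'etale cover I would decompose this VHS into isotypic components; Simpson's theorem guarantees that each component has torsion determinant, producing the torsion characters $\alpha_i\in\dualab A$, while the monodromy of each piece cuts out a quotient $p_i\colon A\to A_i$. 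After descent, $h_{\ast}\omega_X$ splits accordingly into summands of the form $\alpha_i\otimes p_i^{\ast}\sh G_i\otimes\omega_Z$, with each $\sh G_i$ pulled back via $g$ from $A_i$.

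Finally, I would pass to $f_{\ast}\omega_X = g_{\ast}(h_{\ast}\omega_X)$, apply Chen-Jiang to each pushed-forward piece, and verify M-regularity of the resulting factors via the usual Fourier-Mukai characterization, namely that $\codim_{\dualab A_i}\vanloc^k(A_i,\sh F_i)>k$ for all $k>0$, matching the components of $\vanloc^k(A,f_{\ast}\omega_X)$ provided by Green-Lazarsfeld and Simpson. The main obstacle is the second step: extending the Hodge-theoretic description of $h_{\ast}\omega_X$ across the singular locus of $h$ and extracting the direct-sum decomposition intrinsically from VHS, without appealing to the decomposition theorem for pure Hodge modules. This demands careful combination of Deligne's canonical extension, Koll\'ar's torsion-freeness and decomposition theorem for $Rh_{\ast}\omega_X$, and Simpson's semisimplicity and torsion-character results, to recover by hand the splittings that Hodge modules would otherwise provide for free.
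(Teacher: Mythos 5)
Your overall instincts (induct on dimension, replace Hodge modules by polarizable VHS) match the paper, but the core mechanism you propose does not work, and it is not the one the paper uses. The gap is in your second step: the quotients \(p_i\colon A\to A_i\) and torsion characters \(\alpha_i\) of a Chen-Jiang decomposition of \(f_{\ast}\omega_X\) are \emph{not} visible in the monodromy or isotypic decomposition of the VHS \(R^dh_{\ast}\mathbb{Q}\) attached to the Stein fibration \(h\colon X\to Z\). The clearest test case is \(f\) generically finite: there \(h\) is birational, \(d=0\), the VHS is the trivial rank-one system and \(h_{\ast}\omega_X=\omega_Z\) --- yet this is precisely the case where Chen and Jiang's theorem already carries all of its content, with possibly many nontrivial factors \(\alpha_i\otimes p_i^{\ast}\sh F_i\). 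So in your outline the entire decomposition is pushed into the step you defer to ``descent along \(g_{\ast}\)'', and that descent is not formal either: a product \(p^{\ast}\sh G\otimes p_j^{\ast}\sh F_j\) of pullbacks of M-regular sheaves from two different quotients of \(A\) need not be of the form \(r^{\ast}\sh H\) with \(\sh H\) M-regular, so tensoring a Chen-Jiang decomposition of \(g_{\ast}\omega_Z\) with your \(p_i^{\ast}\sh G_i\) does not produce a Chen-Jiang decomposition, and Proposition \ref{prop:CJ-decomp-direct-summand} does not apply. Relatedly, ``each isotypic component has torsion determinant'' is not what Simpson's theorem says; Simpson's input here is that the components of \(\vanloc^k(A,f_{\ast}\omega_X)\) are torsion translates of abelian subvarieties of \(\dualab A\), which is information about twists by points of \(\dualab A\), not about the local system of \(h\).

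The fibration that actually produces the splitting is a different one, dictated by the cohomology support loci rather than by the Stein factorization. After an isogeny one arranges that every component of every \(\vanloc^k(A,f_{\ast}\omega_X)\) is an abelian subvariety; for a codimension-\(k\) component \(\dualab B\) with dual projection \(p\colon A\to B\), Lemma \ref{lemma:f(X)-is-abelian-fibration} shows that the image \(f(X)\) is a union of fibres of \(p\), so that \(f(X)\to p(f(X))\) is a smooth fibration by \(k\)-dimensional abelian varieties with trivial relative canonical bundle. The VHS one then needs lives over a resolution of \(p(f(X))\), and the key input is a relative Gysin morphism \(f_{\ast}\omega_X\to g^{\ast}R^kg_{\ast}f_{\ast}\omega_X\), shown to be split surjective by realizing it over the smooth locus as a morphism of polarizable VHS and combining semisimplicity with Koll\'ar's canonical-extension description of the direct images (Theorem \ref{thm:B}). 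The M-regular sheaf on \(B\) is then extracted from \(R^kg_{\ast}f_{\ast}\omega_X\), which is a direct summand of \(R^kq_{\ast}\omega_X\) and hence admits a Chen-Jiang decomposition by dimensional induction via \cite[Corollary 2.24]{Kollar1986a}; Lemma \ref{lemma:CJ-decomp-from-individual-decomps} assembles these pieces. If you want to salvage your outline, the essential missing idea is that the relevant fibration must be chosen component by component from \(\vanloc^k(A,f_{\ast}\omega_X)\subset\dualab A\), not once and for all from the Stein factorization of \(f\).
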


Following the method of \cite{Chen2018}, the key part of the proof is the following. Assume that the components of the vanishing loci \(\vanloc^k(A,f_{\ast}\omega_X)\) pass through the origin, and are hence abelian subvarieties; this can be arranged via an isogeny of \(A\). Suppose then that \(\dualab B\subset\dualab A\) is a codimension \(k\) component of \(\vanloc^k(A,f_{\ast}\omega_X)\), and let \(p\colon A\to B\) be the projection dual to the inclusion of \(\dualab B\). We must then produce an appropriate M-regular sheaf \(\sh F\) on \(B\) such that \(p^{\ast}\sh F\) is a direct summand of \(f_{\ast}\omega_X\). Given such M-regular sheaves for each such \(B\), the theorem follows for formal reasons (see Lemma \ref{lemma:CJ-decomp-from-individual-decomps}).

In the case where \(f\) is generically finite, Chen and Jiang show that \(R^kp_{\ast}f_{\ast}\omega_X\) is actually the pushforward to \(B\) of the canonical bundle of a lower-dimensional variety, hence admits a Chen-Jiang decomposition by dimensional induction. The M-regular summand of this decomposition serves as \(\sh F\). More precisely, they construct the following diagram (note that the notation here differs slightly from the paper \cite{Chen2018}).

\begin{equation*}
\begin{tikzcd}
X\ar[rd,"g"]\ar[rrd,bend left=20,"f"]\ar[rdd,bend right=20,"q"]\\
&Y\ar[d,"r"]\ar[r,"f'"]&A\ar[d,"p"]\\
&Z\ar[r,"h"]&B
\end{tikzcd}
\end{equation*}

Here \(X\xrightarrow q Z\xrightarrow h B\) is a modified Stein factorization where \(Z\) is smooth and \(h\) generically finite, and \(Y\) is the pullback of \(p\) along \(h\). Then \(q\) is a fibration of relative dimension \(k\), so \(R^kq_{\ast}\omega_X=\omega_Z\) hence \(R^kp_{\ast}f_{\ast}\omega_X=h_{\ast}\omega_Z\). Furthermore \(r\) is a pullback of a morphism of abelian varieties so \(r^{\ast}\omega_Z=\omega_Y\), and since \(g\) is generically finite, \(\omega_Y\) is a direct summand of \(g_{\ast}\omega_X\), hence \(\sh F\) is a direct summand of \(f_{\ast}\omega_X\) by base change.

In the general case of arbitrary \(f\), we use results of Kollár \cite{Kollar1986,Kollar1986a} on variations of Hodge structures and higher direct images of canonical bundles to prove that \(p^{\ast}R^kp_{\ast}f_{\ast}\omega_X\) is a direct summand of \(f_{\ast}\omega_X\), otherwise finishing the proof in the same manner as Chen and Jiang. The more technical proof is deferred to Section \ref{sec:orgd1db5f9}.

\begin{restatable}{mainthm}{firstthmB}
Suppose \(X\xrightarrow f Y\xrightarrow g Z\) are surjective morphisms of smooth projective varieties, and that \(g\) is a smooth fibration of relative dimension \(k\) with \(\omega_{Y/Z}\) trivial. Then \(f_{\ast}\omega_X\) admits \(g^{\ast}R^kg_{\ast}f_{\ast}\omega_X\) as a direct summand.
\label{thm:B-simple}
\end{restatable}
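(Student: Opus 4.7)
The plan is to combine Kollár's decomposition theorem with Grothendieck duality for the smooth morphism $g$. Kollár's theorem applied to $f\colon X \to Y$ gives $Rf_*\omega_X \simeq \bigoplus_q R^q f_*\omega_X[-q]$ in $D^b(Y)$, so that $f_*\omega_X$ is a direct summand of $Rf_*\omega_X$. Applying $Rg_*$ then exhibits $Rg_* f_*\omega_X$ as a direct summand of $R(gf)_*\omega_X$, which is itself formal by Kollár's theorem applied to $gf$. Since $D^b(\mathrm{Coh}(Z))$ is Krull--Schmidt and summands of formal objects are formal, this produces a splitting
\[
Rg_* f_*\omega_X \;\simeq\; \bigoplus_i R^i g_* f_*\omega_X[-i] \quad \text{in } D^b(Z).
\]

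Next, the triviality of $\omega_{Y/Z}$ together with Grothendieck duality gives $g^! = g^*[k]$, so the adjunction
\[
\hom_Y\bigl(f_*\omega_X,\, g^*R^k g_* f_*\omega_X\bigr) \;\cong\; \hom_{D^b(Z)}\bigl(Rg_* f_*\omega_X[k],\, R^k g_* f_*\omega_X\bigr)
\]
provides a canonical projection $\pi\colon f_*\omega_X \to g^*R^k g_* f_*\omega_X$ corresponding, via the splitting above, to the projection onto the top cohomology summand of $Rg_* f_*\omega_X[k]$.

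The main obstacle is to construct a sheaf-level section of $\pi$: the naive derived-category inclusion $g^*R^k g_* f_*\omega_X[-k] \hookrightarrow g^*Rg_* f_*\omega_X$ composed with the counit $g^*Rg_* f_*\omega_X \to f_*\omega_X$ only yields an element of $\ext^k(g^*R^k g_* f_*\omega_X, f_*\omega_X)$, not a map of sheaves. To produce a genuine section I would pass to the fibre product $W = X \times_Z Y$ with projections $p\colon W \to X$ and $q\colon W \to Y$: since $p$ is a base change of $g$, it is smooth of relative dimension $k$ with $\omega_{W/X}$ trivial, so $W$ is smooth projective, $p^*\omega_X = \omega_W$, and $R^k p_*\omega_W = \omega_X$ by relative Serre duality; flat base change identifies $R^n q_*\omega_W \cong g^*R^n(gf)_*\omega_X$, and the section $\sigma\colon X \to W$, $x \mapsto (x, f(x))$, of $p$ satisfies $q\sigma = f$ and $\sigma^*\omega_W = \omega_X$ (a normal-bundle computation using that $\det N_{X/W} = f^*\omega_{Y/Z}^\vee$ is trivial). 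Combining these ingredients with Kollár's variation-of-Hodge-structure-based results on torsion-freeness and semi-positivity of higher direct images of canonical bundles, applied to the Leray decompositions of both $Rq_*\omega_W$ and $R(fp)_*\omega_W$, is where I anticipate the technical heart of the argument to lie.
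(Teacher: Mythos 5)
Your construction of the projection \(f_{\ast}\omega_X\to g^{\ast}R^kg_{\ast}f_{\ast}\omega_X\) via \(g^{!}\cong g^{\ast}[k]\) and adjunction is exactly the map \(\Psi_X\) that the paper works with, and you have correctly located the obstruction: the derived-category splitting of \(\mathbf{R}g_{\ast}f_{\ast}\omega_X\) (which is indeed available from Kollár's Theorem 3.4) transports under adjunction only to a class in \(\ext^k_Y(g^{\ast}R^kg_{\ast}f_{\ast}\omega_X,f_{\ast}\omega_X)\), not to a section of sheaves. But the proposal stops exactly where the proof has to begin. The fibre product \(W=X\times_ZY\) with its diagonal section repackages the same Gysin-type map from a different angle, and ``torsion-freeness and semi-positivity'' of higher direct images do not by themselves produce a splitting of anything. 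The mechanism that actually yields the section in the paper is Hodge-theoretic and is entirely absent from your sketch: over an open \(Z^0\subset Z\) with normal crossings complement where \(q=g\circ f\) is smooth, the pushforward \(g_{\ast}\Psi\) is induced on lowest Hodge pieces by a morphism of polarizable variations of Hodge structure \(R^{d+k}q^0_{\ast}\mathbb{Q}\to R^kg^0_{\ast}\mathbb{Q}\otimes R^{d+2k}q^0_{\ast}\mathbb{Q}\) (fibrewise, dual to cup product with classes pulled back from \(Y\)); semisimplicity of the category of polarizable VHS splits this morphism over \(Z^0\), and Kollár's identification of \(q_{\ast}\omega_{X/Z}\) and \(R^kq_{\ast}\omega_{X/Z}\) with lowest Hodge pieces of upper canonical extensions propagates the splitting across the boundary (using that \(R^kg^0_{\ast}\mathbb{Q}\) has trivial monodromy, so the tensor product commutes with canonical extension). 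Without this appeal to semisimplicity --- equivalently, to the theorem of the fixed part / global invariant cycles --- there is no source for a sheaf-level section, and I do not see how your \(W\)-construction substitutes for it.

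There is a second, independent gap: you describe \(\pi\) as a ``canonical projection,'' but its surjectivity onto \(g^{\ast}R^kg_{\ast}f_{\ast}\omega_X\) is not formal either. The paper has to prove that the image of \(g_{\ast}\Psi\) is all of \(R^kg_{\ast}f_{\ast}\omega_{X/Z}\) rather than a proper subsheaf, and does so by showing that at a general point the fibre of \(g_{\ast}\Psi\) restricts to an isomorphism onto the \(\pi_1\)-invariant part of \(H^0(F,\omega_F)\), again via global invariant cycles. So both the surjectivity and the splitting of the map you construct require the VHS input; the formality of \(\mathbf{R}g_{\ast}f_{\ast}\omega_X\) in \(D^b(Z)\) and the geometry of the diagonal section of \(W\to X\) are not enough on their own.
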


The idea is to construct, by Grothendieck duality, a morphism 
\begin{align*}
\relgxy\colon f_{\ast}\omega_{X/Y}\to g^{\ast}R^kg_{\ast}f_{\ast}\omega_{X/Z}
\end{align*}
which, fibrewise, encodes certain Gysin morphisms. This is most easily described when \(Z\) is a point. Let \(y\in Y\) be a general point and \(\fibreXy=f^{-1}(y)\) the corresponding fibre of \(f\). Then the fibre of \(\relgxy\) at \(y\) is a morphism \(H^0(\fibreXy,\omega_{\fibreXy})\to H^k(Y,f_{\ast}\omega_X)\). When composed with the edge map \(H^k(Y,f_{\ast}\omega_X)\to H^k(X,\omega_X)\) of the Leray spectral sequence we get a morphism \(\relgxyq|_y\colon H^0(\fibreXy,\omega_{\fibreXy})\to H^k(X,\omega_X)\). On the other hand, the inclusion \(\fibreXy\to X\) gives a Gysin morphism \(H^d(\fibreXy,\mathbb{C})\to H^{2k+d}(\fibreXy,\mathbb{C})\) where \(d=\dim \fibreXy\), and the restriction to \(H^0(\fibreXy,\omega_{\fibreXy})\) under the Hodge decomposition coincides with \(\relgxyq|_y\).

To prove the theorem, we construct a morphism of variations of Hodge structure over an open locus which encodes the topological Gysin morphisms on fibres (although for technical reasons, we actually split the direct image \(g_{\ast}\relgxy\) on \(Z\) instead and then use the push-pull adjunction). The splitting then comes from the semisimplicity of the category of polarizable VHS, and we use Kollár's results on higher direct images of canonical bundles \cite[Theorem 2.6]{Kollar1986a} to extend from the open locus. The resulting more precise statement, describing the morphism \(f_{\ast}\omega_X\to g^{\ast}R^kg_{\ast}f_{\ast}\omega_X\), is given as Theorem \ref{thm:B}.

\subsection*{Acknowledgements}
     I would like to thank my advisor Christian Schnell for suggesting this problem, and all his support and advice. I also thank Ben Wu for many useful discussions, as well as comments on a draft of this paper.

\section{Preliminaries}
\label{sec:orgecd4683}
We work throughout with smooth varieties over \(\mathbb{C}\).
\subsection{Generic vanishing}
\label{sec:org893a4b0}
Fix an abelian variety \(A\) throughout this section, and let \(\dualab A\) be the dual abelian variety. Let us recall some basic notions related to GV-sheaves.

For a coherent sheaf \(\sh F\) on \(A\), let \(\vanloc^k(A,\sh F)=\{\alpha\in\dualab A\mid H^k(A,\sh F\otimes\alpha)\ne 0\}\) denote its \(k\)th cohomology support locus. This is a closed subvariety of \(\dualab A\).

\begin{definition}
A coherent sheaf \(\sh F\) on an abelian variety \(A\) is a \emph{GV-sheaf} if \(\codim_{\dualab A}\vanloc^k(A,\sh F)\ge k\) for every \(k\ge 0\), and \emph{M-regular} if \(\codim_{\dualab A}\vanloc^k(A,\sh F)>k\) for every \(k>0\).
\label{def:gv-sheaf}
\end{definition}

Following \cite{Schnell2019}, define the symmetric Fourier-Mukai transform to be the contravariant functor

\[
\fmt_A\colon D^b_{ \mathrm{coh}}(A)\to D^b_{\mathrm{coh}}(\dualab A)
\]

given by the formula

\[
\fmt_A(K)=\mathbf{R}(\mathrm{pr}_2)_{\ast}(P\otimes \mathrm{pr}_1^{\ast}D_A(K))
\]

where \(\mathrm{pr}_1\colon A\times \dualab A\to A\) and  \(\mathrm{pr}_2\colon A\times \dualab A\to \dualab A\) are the two projections, \(P\) is the Poincaré line bundle on \(A\times \dualab A\) normalized by requiring that its fibres over \(0\in A\) respectively \(0\in\dualab A\) are trivial, and 
\[ 
D_A(K)=\mathbf{R}\shom(K,\omega_A[\dim A]) 
\]
is the Grothendieck duality functor on \(A\). Then \(\fmt_A\) is an equivalence of categories with inverse \(\fmt_{\dualab A}\), and is a version of the Fourier-Mukai transform particularly well-adapted to talking about generic vanishing.

GV-sheaves and M-regular sheaves can be defined in terms of the Fourier-Mukai transform. This goes back to Hacon for GV-sheaves, and Pareschi and Popa \cite{Pareschi2011} for M-regular sheaves.

\begin{proposition}[{\cite[Theorem 1.2]{Hacon2004},\cite[Proposition 2.8]{Pareschi2011}}]
A coherent sheaf \(\sh F\) on an abelian variety \(A\) is a GV-sheaf if and only if \(\fmt_A(\sh F)\) is a sheaf (i.e. a complex with cohomology only in degree \(0\)), and \(\sh F\) is M-regular if and only if \(\fmt_A(\sh F)\) is furthermore a torsion-free sheaf.
\label{prop:GV-and-FM}
\end{proposition}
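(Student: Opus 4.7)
The plan is to compute the cohomology sheaves of $\fmt_A(\sh F)$ in terms of the loci $\vanloc^k(A,\sh F)$ via base change and Serre duality, and then apply a standard codimension criterion to translate the GV and M-regular conditions into concentration in a single degree and torsion-freeness, respectively. Since $\omega_A$ is trivial, the Grothendieck duality functor on $A$ reduces to $D_A(\sh F)\cong \mathbf{R}\shom(\sh F,\mathcal{O}_A)[\dim A]$, so
\[
\fmt_A(\sh F)\cong \mathbf{R}(\mathrm{pr}_2)_{\ast}\bigl(P\otimes \mathrm{pr}_1^{\ast}\mathbf{R}\shom(\sh F,\mathcal{O}_A)\bigr)[\dim A].
\]

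First I would apply cohomology and base change along the flat projection $\mathrm{pr}_2$, using that $P|_{A\times\{\alpha\}}\cong\alpha$, to identify the generic fibre of $\mathcal{H}^i(\fmt_A(\sh F))$ at $\alpha\in\dualab A$ with $\ext^{i+\dim A}(\sh F,\alpha)$. Serre duality on $A$ (again using $\omega_A\cong\mathcal{O}_A$) rewrites this as $H^{-i}(A,\sh F\otimes\alpha^{-1})^{\vee}$, so that up to the sign map on $\dualab A$ the support of $\mathcal{H}^i(\fmt_A(\sh F))$ coincides with $\vanloc^{-i}(A,\sh F)$.

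Next I would combine the fibre computation with the Grothendieck–Verdier exchange formula for the Fourier–Mukai transform, which yields, up to shifts and pullback by $-1_{\dualab A}$, an isomorphism of the form $D_{\dualab A}\fmt_A(\sh F)\simeq \fmt_{\dualab A}(\sh F)$. To conclude I would invoke the standard local-duality criterion: on the smooth variety $\dualab A$, a bounded complex $K$ of coherent sheaves is concentrated in degree $0$ iff the cohomology sheaves of $D_{\dualab A}(K)$ satisfy $\codim\supp\mathcal{H}^i\ge i$ for every $i$, and is moreover a torsion-free sheaf precisely when this inequality is strict for $i>0$. Feeding in the identification of supports from the previous step, the first condition is exactly the GV condition $\codim\vanloc^k\ge k$, and the strict version is M-regularity.

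The main obstacle is bookkeeping — tracking shifts, the sign map $-1_{\dualab A}$, and the exact form of the Grothendieck–Verdier exchange formula for $\fmt_A$ — so that codimensions and degrees align correctly; once the exchange formula is carefully set up, both equivalences reduce to the combination of base change, Serre duality on $A$, and the local-duality codimension criterion on $\dualab A$.
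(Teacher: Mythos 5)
The paper does not prove this proposition; it is quoted from Hacon and Pareschi--Popa, so there is no internal proof to compare against and I am judging your argument on its own merits. Your skeleton --- Grothendieck--Verdier exchange for $\fmt_A$, cohomology and base change, and codimension-of-support criteria for $\sext$ sheaves --- is indeed the standard route in the cited references. However, two of your intermediate claims are false as stated. The first is the assertion that $\supp\mathcal{H}^i(\fmt_A(\sh F))$ coincides (up to sign) with $\vanloc^{-i}(A,\sh F)$. Cohomology and base change only guarantees that the \emph{top} nonvanishing degree of the derived fibre agrees with the top nonvanishing degree of the stalks; in lower degrees the two can differ, and what is true is only the equality of unions $\bigcup_{j\ge j_0}\supp\mathcal{H}^j(\fmt_A(\sh F))=(-1)^{\ast}\bigcup_{k\le -j_0}\vanloc^k(A,\sh F)$. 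Concretely, for $\sh F=\mathcal{O}_A$ on an elliptic curve, $\fmt_A(\mathcal{O}_A)$ is the structure sheaf of the origin placed in degree $0$, so $\mathcal{H}^{-1}=0$, while $\vanloc^{1}(A,\mathcal{O}_A)=\{0\}\ne\emptyset$. Worse, unions over $k\le j_0$ interact badly with the inequalities $\codim\vanloc^k\ge k$ (the codimension of such a union is governed by the \emph{smallest} $k$ occurring), so even the corrected statement about $\fmt_A(\sh F)$ is not usable. The computation you actually need is for the Serre dual, $D_{\dualab A}(\fmt_A(\sh F))\cong(-1)^{\ast}\mathbf{R}(\mathrm{pr}_2)_{\ast}(P\otimes\mathrm{pr}_1^{\ast}\sh F)[g]$ with $g=\dim A$: its derived fibres are $H^{\bullet}(A,\sh F\otimes\alpha)$ with no dualization, the relevant unions run over $k\ge j_0$, and there the conditions $\codim\vanloc^k\ge k$ (resp.\ $\ge k+1$ for $k\ge 1$) do transfer to $\codim\supp R^k(\mathrm{pr}_2)_{\ast}(P\otimes\mathrm{pr}_1^{\ast}\sh F)\ge k$ (resp.\ $\ge k+1$). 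Your final step asks for exactly these supports, but your base-change step computes the supports of the wrong complex.

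The second problem is that the ``local-duality criterion'' is not an equivalence as you state it: the inequalities $\codim\supp\mathcal{H}^i(D(K))\ge i$ are vacuous for $i\le 0$, so they cannot exclude cohomology of $K$ in positive degrees (e.g.\ $K=\mathcal{O}[-1]$ satisfies your hypothesis trivially but is not a sheaf). You need the separate a priori bound $\fmt_A(\sh F)\in D^{\le 0}$, which does follow from your fibrewise Serre duality computation ($\ext^{i+g}(\sh F,\alpha)=0$ for $i>0$) together with Nakayama; once that is in place, the spectral sequence $\sext^p(\mathcal{H}^{-q}(D(K)),\mathcal{O})\Rightarrow\mathcal{H}^{p+q}(K)$ combined with $\codim\supp\mathcal{H}^{-q}(D(K))\ge -q$ gives $K\in D^{\ge 0}$, hence concentration in degree $0$; and the torsion-freeness statement then follows by applying the syzygy criterion ($\mathcal{G}$ torsion-free iff $\codim\supp\sext^i(\mathcal{G},\mathcal{O})\ge i+1$ for all $i\ge 1$) to the sheaf $\fmt_A(\sh F)$. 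All of these gaps are repairable by standard arguments, but as written the proof rests on two false intermediate statements and does not assemble into a complete argument.
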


\begin{proposition}[{\cite[Proposition 4.1]{Schnell2019}}]
Let \(f\colon A\to B\) be a morphism of abelian varieties. Denoting by \(\dualab f\colon\dualab B\to\dualab A\) the dual morphism, there are natural isomorphisms
\begin{align*}
\fmt_B\circ \mathbf{R}f_{\ast}&=\mathbf{L}\dualab f^{\ast}\circ\fmt_A\\
\fmt_A\circ \mathbf{L}f^{\ast}&=\mathbf{R}\dualab f_{\ast}\circ\fmt_B
\end{align*}
\label{prop:functoriality-of-FM}
\end{proposition}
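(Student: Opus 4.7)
The plan is to prove the first identity directly from the definition of $\fmt$, and then to deduce the second identity from the first by applying it to the dual morphism $\dualab f$.

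The geometric input is the defining property of the dual homomorphism: on the product $A\times\dualab B$, equipped with the two morphisms
\begin{equation*}
\begin{tikzcd}
  A\times\dualab A & A\times\dualab B \ar[l, "\id_A\times\dualab f"'] \ar[r, "f\times\id_{\dualab B}"] & B\times\dualab B,
\end{tikzcd}
\end{equation*}
one has a canonical isomorphism
\begin{align*}
(f\times\id_{\dualab B})^{\ast} P_B \cong (\id_A\times\dualab f)^{\ast} P_A,
\end{align*}
since both sides, normalized in the same way, represent the biextension pairing $(a,\beta)\mapsto\langle f(a),\beta\rangle=\langle a,\dualab f(\beta)\rangle$.

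For the first identity I would expand $\fmt_B\circ\mathbf{R}f_{\ast}$ using the definition, immediately commute $D_B$ past $\mathbf{R}f_{\ast}$ by Grothendieck duality (which holds without any shift here since all abelian varieties have trivial dualizing sheaf and so $\omega_{A/B}$ is trivial), and then transport the entire expression to $A\times\dualab B$ using flat base change along $f\times\id_{\dualab B}$ together with the projection formula. The Poincaré-bundle identity above then lets me replace $(f\times\id_{\dualab B})^{\ast} P_B$ by $(\id_A\times\dualab f)^{\ast} P_A$, after which running flat base change and the projection formula in reverse along $\id_A\times\dualab f$ rearranges the result into $\mathbf{L}\dualab f^{\ast}\circ\fmt_A$ evaluated on the input.

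The second identity then follows by a short formal argument. Applying the first identity to $\dualab f\colon\dualab B\to\dualab A$ in place of $f\colon A\to B$, whose dual is again $f$ itself, yields $\fmt_{\dualab A}\circ\mathbf{R}\dualab f_{\ast}=\mathbf{L}f^{\ast}\circ\fmt_{\dualab B}$. Evaluating at $\fmt_B K$ and using that $\fmt_{\dualab B}\circ\fmt_B$ and $\fmt_A\circ\fmt_{\dualab A}$ are both isomorphic to the identity (since $\fmt_A$ is an equivalence with inverse $\fmt_{\dualab A}$) collapses this to the desired equality $\fmt_A\circ\mathbf{L}f^{\ast}=\mathbf{R}\dualab f_{\ast}\circ\fmt_B$.

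The main obstacle is purely bookkeeping: checking that the Cartesian squares involved are Tor-independent (automatic, since the projections $\pi_2$ in each product are smooth), and carefully tracking the identifications across the three product spaces. Beyond this, only standard derived-category manipulations and the universal property of the Poincaré bundle enter the argument.
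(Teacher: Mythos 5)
The paper offers no proof of this proposition; it is quoted directly from \cite{Schnell2019}. Your argument is correct and is essentially the standard (and Schnell's) proof: Grothendieck duality commutes \(D\) past \(\mathbf{R}f_{\ast}\) because \(f^{!}\) carries \(\omega_B[\dim B]\) to \(\omega_A[\dim A]\) (your appeal to triviality of \(\omega_{A/B}\) is a correct way to see this for abelian varieties, though the identity \(D_B\circ\mathbf{R}f_{\ast}\cong\mathbf{R}f_{\ast}\circ D_A\) holds for any morphism of smooth projective varieties); flat base change along the smooth projections and the projection formula transport everything to \(A\times\dualab B\); and the see-saw identification \((f\times\id_{\dualab B})^{\ast}P_B\cong(\id_A\times\dualab f)^{\ast}P_A\) converts the first formula into the second side, with the remaining formal deduction of the second identity from the first via biduality and \(\fmt_{\dualab A}\circ\fmt_A\cong\id\) exactly as you describe.
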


\begin{proposition}[{\cite[Proposition 5.1]{Schnell2019}}]
For \(a\in A\) let \(t_a\colon A\to A\) be the translation morphism and \(P_a\) the corresponding line bundle on \(\dualab A\). For \(a\in A\) and \(\alpha\in \dualab A\), there are natural isomorphisms
\begin{align*}
\fmt_A\circ(t_a)_{\ast}=(P_a\otimes -)\circ\fmt_A\\
\fmt_A\circ(P_{\alpha}\otimes -)=(t_{\alpha})_{\ast}\circ\fmt_A
\end{align*}
\label{prop:translation-of-FM}
\end{proposition}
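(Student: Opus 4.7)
The plan is to reduce both identities to a single key geometric fact about the Poincaré bundle under translation, together with standard machinery (projection formula, flat base change, and the compatibility of Grothendieck duality with isomorphisms).

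The geometric input is the following seesaw-type identity: for $a\in A$, writing $t_a\times\id$ for the induced automorphism of $A\times\dualab A$, one has a canonical isomorphism $(t_a\times\id)^{\ast}P\cong P\otimes \mathrm{pr}_2^{\ast}P_a$. This follows from the universal property of the Poincaré bundle: pulling $P$ back along the addition map $m_A\times\id_{\dualab A}\colon A\times A\times\dualab A\to A\times\dualab A$ gives $\mathrm{pr}_{13}^{\ast}P\otimes\mathrm{pr}_{23}^{\ast}P$, and restricting to $\{a\}\times A\times \dualab A$ yields the claim. The symmetric statement $(\id\times t_{\alpha})^{\ast}P\cong P\otimes \mathrm{pr}_1^{\ast}P_\alpha$ holds by the same argument applied on the $\dualab A$-side (using $A\cong\dualab{\dualab A}$).

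With this in hand, the first identity falls out by chasing the definition. Starting from $\fmt_A((t_a)_{\ast}K)=\mathbf{R}(\mathrm{pr}_2)_{\ast}(P\otimes\mathrm{pr}_1^{\ast}D_A((t_a)_{\ast}K))$, I would use that $t_a$ is an isomorphism to commute Grothendieck duality with $(t_a)_{\ast}$, then flat base change to rewrite $\mathrm{pr}_1^{\ast}(t_a)_{\ast}D_A(K)$ as $(t_a\times\id)_{\ast}\mathrm{pr}_1^{\ast}D_A(K)$. Applying the projection formula along $t_a\times\id$ and substituting the key identity for $(t_a\times\id)^{\ast}P$ gives $\mathbf{R}(\mathrm{pr}_2)_{\ast}((P\otimes\mathrm{pr}_2^{\ast}P_a)\otimes\mathrm{pr}_1^{\ast}D_A(K))$; a final projection formula along $\mathrm{pr}_2$ pulls $P_a$ out of the pushforward to yield $P_a\otimes\fmt_A(K)$.

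For the second identity, one can either repeat the computation using $(\id\times t_\alpha)^{\ast}P\cong P\otimes\mathrm{pr}_1^{\ast}P_\alpha$ and the identification $D_A(P_\alpha\otimes K)\cong P_\alpha^{-1}\otimes D_A(K)$, or more cleanly deduce it from the first identity applied to $\dualab A$ together with the inversion formula $\fmt_{\dualab A}\circ\fmt_A\cong\id$. The only real obstacle is bookkeeping: one must take care of signs when $P_\alpha$ passes through $D_A$, of which projection a line bundle is pulled back from, and of the direction of the base change square. None of this is difficult, but it is where errors most easily creep in; the conceptual content is contained entirely in the seesaw computation.
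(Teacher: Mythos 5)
Your argument is correct and is the standard one: the paper itself gives no proof of this proposition, only the citation to \cite[Proposition 5.1]{Schnell2019}, and the computation there proceeds in essentially the same way, reducing everything to the seesaw identities $(t_a\times\id)^{\ast}P\cong P\otimes\mathrm{pr}_2^{\ast}P_a$ and $(\id\times t_{\alpha})^{\ast}P\cong P\otimes\mathrm{pr}_1^{\ast}P_{\alpha}$ together with flat base change, the projection formula, and compatibility of $D_A$ with the isomorphism $t_a$ (using $t_a^{\ast}\omega_A\cong\omega_A$). The sign bookkeeping you flag does work out: $D_A(P_{\alpha}\otimes K)\cong P_{-\alpha}\otimes D_A(K)$ produces $(t_{-\alpha})^{\ast}=(t_{\alpha})_{\ast}$ after base change along $\mathrm{pr}_2$, recovering the stated second identity.
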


The following characterization of the vanishing loci arising from canonical bundles is due to Green-Lazarsfeld and Simpson for ordinary direct images.
\begin{theorem}[{\cite{Green1987,Green1991,Simpson1993}}]
Suppose \(f\colon X\to A\) is any morphism from a smooth projective variety \(X\) to an abelian variety \(A\). For every \(i\), the sheaf \(R^if_{\ast}\omega_X\) is a GV-sheaf. Furthermore, for every \(k\), every component of \(\vanloc^k(A,R^if_{\ast}\omega_X)\) is a translate of an abelian subvariety of \(\dualab A\) by a point of finite order.
\label{thm:canonical-bundles-are-GV}
\end{theorem}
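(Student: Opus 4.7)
The plan is to verify the three assertions in turn—the GV-property of $R^if_{\ast}\omega_X$, the abelian-subvariety structure of the components of $V^k$, and their torsion nature—following the classical Hodge-theoretic arguments of Green-Lazarsfeld and Simpson, with the generalization to higher direct images in the spirit of Hacon. For the GV-property, I would apply the Fourier-Mukai characterization of Proposition~\ref{prop:GV-and-FM}: show that $\fmt_A(R^if_{\ast}\omega_X)$ is concentrated in cohomological degree zero. Using Kollár's decomposition $\mathbf{R}f_{\ast}\omega_X\cong\bigoplus_i R^if_{\ast}\omega_X[-i]$, one reduces to computing $\fmt_A(\mathbf{R}f_{\ast}\omega_X)$. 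The natural diagram is $X\times\dualab A\xrightarrow{f\times\id}A\times\dualab A\xrightarrow{\mathrm{pr}_2}\dualab A$, and by flat base change together with Grothendieck duality the transform is expressed in terms of higher direct images of $\omega_{X\times\dualab A/\dualab A}\otimes(f\times\id)^{\ast}P$ along the second projection, where $P$ is the Poincaré bundle. Kollár vanishing applied in this relative setting forces concentration in a single degree. The main bookkeeping obstacle here is tracking duality under base change; otherwise the argument is standard.

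For the abelian-subvariety structure of the components of $V^k(A,R^if_{\ast}\omega_X)$, I would invoke the Green-Lazarsfeld deformation-theoretic argument. Over the open locus $U\subset A$ where $f$ is smooth, $R^if_{\ast}\omega_X|_U$ is the top Hodge piece of a polarizable variation of Hodge structure. A tangent vector $\xi\in T_{\alpha}V^k$ corresponds, via Dolbeault theory and Griffiths transversality, to a cup-product obstruction against a fixed class in $H^1(A,\mathbb{C})$, and the resulting integrability condition identifies the tangent directions of $V^k$ at $\alpha$ with a sub-Hodge-structure of $H^1(A,\mathbb{C})$. This sub-Hodge-structure exponentiates to an abelian subvariety of $\dualab A$, so every component of $V^k$ is a translate of an abelian subvariety. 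The passage from the smooth locus $U$ back to all of $A$ is made possible by the GV-property just established, which rigidifies the extension of the cohomology support loci across the discriminant.

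For the finite-order claim, I would appeal to Simpson's theorem in \cite{Simpson1993}: irreducible positive-dimensional components of cohomology support loci of local systems of Hodge-theoretic origin are preserved by the $\mathbb{C}^{\ast}$-action on the Dolbeault moduli, and the rank-one fixed points of this action are exactly the torsion characters. Hence any such component must meet the torsion subgroup of $\dualab A$ and is therefore a translate of an abelian subvariety by a point of finite order. I expect this last ingredient to be the deepest step, as it rests on the nonabelian Hodge correspondence and the properness of the $\mathbb{C}^{\ast}$-action on Hitchin moduli; by contrast the GV step and the Green-Lazarsfeld step reduce to linear Hodge theory plus Kollár-type vanishing and can be carried out concretely within the Fourier-Mukai formalism already set up in Section~\ref{sec:org893a4b0}.
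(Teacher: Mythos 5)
The paper does not actually prove this statement: it is a citation of Green--Lazarsfeld, Hacon and Simpson for the case \(i=0\), and the only argument supplied is the reduction of the higher direct images to that case, namely Kollár's result \cite[Corollary 2.24]{Kollar1986a} that \(R^if_{\ast}\omega_X\) is a direct summand of \(g_{\ast}\omega_Y\) for some \(g\colon Y\to A\) with \(Y\) smooth projective, together with the observation that the GV property and the structure of the loci \(\vanloc^k\) are inherited by direct summands. Your proposal instead tries to sketch proofs of the cited theorems themselves, which is far more than is needed here, and the sketch has two genuine gaps.

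First, the Fourier--Mukai step is wrong as stated: \(\fmt_A(\mathbf{R}f_{\ast}\omega_X)\) is \emph{not} concentrated in a single degree in general. By Kollár's splitting it equals \(\bigoplus_i\fmt_A(R^if_{\ast}\omega_X)[i]\), so even when every \(R^if_{\ast}\omega_X\) is GV the total transform is spread over degrees \(-d,\dots,0\); for \(X=A\times F\) with \(\dim F=d\) one gets skyscrapers at the origin in all of these degrees. Consequently, knowing the amplitude of the total transform does not pin down the amplitude of each summand, and the GV property of the individual \(R^if_{\ast}\omega_X\) does not follow from "computing \(\fmt_A(\mathbf{R}f_{\ast}\omega_X)\)"; you would have to run Hacon's vanishing argument on each \(R^if_{\ast}\omega_X\) separately, or (as the paper does) replace \(R^if_{\ast}\omega_X\) by a direct summand of some \(g_{\ast}\omega_Y\) and quote the known \(i=0\) case. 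Second, for the structure of the components of \(\vanloc^k(A,R^if_{\ast}\omega_X)\) with \(i>0\), the Green--Lazarsfeld deformation argument and Simpson's torsion theorem are results about \(\omega_X\) and about local systems on \(X\); your proposed passage from the smooth locus of \(f\) "across the discriminant," justified only by the phrase that the GV property "rigidifies" the loci, is not an argument. The standard (and the paper's) device for this step is again Kollár's Corollary 2.24, which converts each \(R^if_{\ast}\omega_X\) into a direct summand of an honest \(g_{\ast}\omega_Y\), after which the cited theorems apply verbatim.
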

By work of Kollár, for each \(i\) the higher direct image \(R^if_{\ast}\omega_X\) is a direct summand in \(g_{\ast}\omega_Y\) for some \(g\colon Y\to A\), where \(Y\) is smooth projective \cite[Corollary 2.24]{Kollar1986a}, and the properties stated in the theorem are inherited by direct summands, so we get the theorem for higher direct images as well.

\subsection{Variations of Hodge structure and higher direct images of canonical bundles}
\label{sec:org27b505d}
We will fix notation for, and recall some facts about, variations of Hodge structure. For the full definition see e.g. \cite{Peters2008}. We will follow the notation of \cite{Kollar1986a}; see also that paper for an introduction to canonical extensions.

    The data of a VHS of weight \(k\) on a smooth variety \(U\) consists of a local system \(H\), which for us will always have coefficient group \(\mathbb{Q}\), together with a filtration by holomorphic subbundles of the vector bundle \(\sh H=H\otimes_{\mathbb{Q}}\mathcal{O}_U\), denoted 
\[
\sh H=\sh F^0(H)\supset\cdots\supset\sh F^n(H)\supset 0
\]
by abuse of notation, such that the fibre of the filtration at a given point \(x\) defines a rational Hodge structure of weight \(k\) on the rational vector space \(H_x\). We likewise let \(\operatorname{Gr}^i(H)=\sh F^i/\sh F^{i+1}\), which are again vector bundles. We let \(\nabla\colon \sh H\to\sh H\otimes\Omega_U^1\) denote the induced Gauss-Manin connection, with respect to which the Hodge filtration is required to satisfy the Griffiths transversality condition
\begin{align*}
\nabla(\sh F^p)\subset\sh F^{p-1}\otimes\Omega_U^1
\end{align*}
Finally, a polarization on \(H\) is a map of local systems \(H\otimes H\to \mathbb{Q}_U\) which induces polarizations on the rational Hodge structures \(H_x\) for all \(x\in U\).

Suppose \(f\colon X\to Y\) is a smooth morphism with fibres of dimension \(d\). For any \(k\), the sheaf \(R^kf_{\ast}\mathbb{Q}_X\) is then a local system underlying a polarized VHS of weight \(k\). In particular, in middle degree \(d\) and up the bottom piece of the Hodge filtration is 
\[
\sh F^{k+d}(R^{k+d}f_{\ast}\mathbb{Q}_X)=R^kf_{\ast}\omega_{X/Y}.
\] 

Suppose now that \(H\) underlies a VHS on an open subset \(X^0\subset X\) such that \(X\setminus X^0\) is a normal crossings divisor. Suppose the monodromy of \(H\) around this divisor is quasi-unipotent. A choice of set-theoretic logarithm \(\log\colon \mathbb{C}^{\ast}\to \mathbb{C}\) yields a corresponding canonical extension of \(\sh H\) and the filtration \(\sh F^{\bullet}(H)\) to vector bundles on \(X\). One way to fix such a logarithm is by choosing a fixed length \(2\pi\) interval for the imaginary values of the logarithm. The choice \([0,2\pi)\) is called the upper canonical extension in \cite{Kollar1986a}, and will be denoted by \({}^u\sh H,{}^u\sh F^{\bullet}(H)\). The choice \((-2\pi,0]\) gives the lower canonical extension, denoted by \({}^l\sh H,{}^l\sh F^{\bullet}(H)\). Similarly, \({}^u \operatorname{Gr}^i(H)\) and \({}^l \operatorname{Gr}^i(H)\) denote the associated graded pieces of the extended filtrations.

We briefly recall the local construction. In an analytic neighbourhood of a point in \(X\setminus X^0\), \(X^0\) looks like \((\mathbf{D}^{\ast})^s\times \mathbf{D}^r\) for some \(s\) and \(r\), where \(\mathbf{D}\) is the unit disk and \(\mathbf{D}^{\ast}\) the punctured unit disk. The local monodromy on a fixed fibre of \(H\) in this neighbourhood is generated by the monodromy operators \(T_1,\ldots,T_s\) corresponding to the generators of the fundamental groups of each \(\mathbf{D}^{\ast}\). We will describe how to extend over each \(\mathbf{D}^{\ast}\) separately, so it suffices for us to assume that \(s=1\) and \(r=0\), so we simply have a VHS on \(\mathbf{D}^{\ast}\) with quasi-unipotent monodromy generated by an operator \(T\).

Let \(\exp\colon \mathbf{H}\to \mathbf{D}^{\ast}\) be the universal cover of the unit disk by the left half plane in \(\mathbb{C}\). Then \(T\) acts on global sections of \(\exp^{\ast}\sh H\) by the action of pullback along the translation by \(i\) of \(\mathbf{H}\). In particular, \(T\) acts on the space \(V\) of global flat sections.

Now choose coordinates on \(V\) such that \(T\) decomposes as a product \(T=UD\) of a unipotent matrix \(U\) and a diagonal matrix \(D\). The unipotent matrix has a logarithm given by the usual power series expansion 
\begin{align*}
\log U=\sum_{k=1}^{\infty}(-1)^{k+1}\frac{(B-I)^k}{k},
\end{align*}
while our choice of logarithm gives a logarithm \(\log D\); thus we get \(N=\log T=\log U+\log D\). If \(v\in V\), then the section 
\begin{align*}
s(z)=\exp\left(-\frac{1}{2\pi i}N\cdot\log z\right)v(z)
\end{align*}
is \(T\)-invariant, hence descends to a global section of \(\sh H\). This defines a trivialization of \(\sh H\), hence an extension of \(\sh H\) to a free sheaf on \(\mathbf{D}\). This also gives an extension of the Gauss-Manin connection on \(\sh H\) to a connection on the extension with logarithmic singularities at \(0\), though we will not need a detailed description of this. The nilpotent orbit theorem says that the filtration \(\sh F^{\bullet}(H)\) likewise extends, and these local extensions glue to an extension of \(\sh H\) to \(X\).

One technical obstacle with this theory is that if \(H_i,i=1,2\) are two VHS on \(\mathbf{D}^{\ast}\) with monodromy operators \(T_i\), the monodromy operator of \(H_1\otimes H_2\) is \(T=T_1\otimes T_2\), but the chosen logarithms of \(T\) and the \(T_i\) may not be directly related. Decomposing \(T_i=U_iD_i\) and \(T=UD\) in a unipotent and diagonal part, the eigenvalues of \(D\) are products \(d_1d_2\) where \(d_i\) is an eigenvalue of \(D_i\). But it is not necessarily the case that \(\log (d_1d_2)=\log d_1+\log d_2\), and as a consequence it is not necessarily the case that the canonical extension of \(H\), with this fixed choice of logarithm, is the tensor product of the canonical extensions of the \(H_i\). However, if, say, \(H_1\) has trivial monodromy, then canonical extensions and tensor products will in fact commute in this special case since \(d_1d_2=d_2\) for every pair of eigenvalues as above. This will be important in the proof of Theorem \ref{thm:B-simple}.

Using this machinery of canonical extensions, Kollár proves the following results.
\begin{theorem}[{\cite[Theorem 2.6]{Kollar1986a}}]
Let \(f\colon X\to Y\) be a surjective map of relative dimension \(d\) between smooth projective varieties. Suppose \(Y^0\subset Y\) is an open subset such that \(Y\setminus Y^0\) is a normal crossings divisor and \(f\) is smooth over \(Y^0\). Let \(X^0=f^{-1}(Y^0)\) and \(f^0=f|_{X^0}\). Then
\begin{align*}
R^kf_{\ast}\omega_{X/Y}&\cong{}^u\sh F^{k+d}(R^{k+d}f^0_{\ast}\mathbb{Q}_{X^0})\\
R^kf_{\ast}\mathcal{O}_X&\cong {}^l \operatorname{Gr}^0(R^kf^0_{\ast}\mathbb{Q}_{X^0})
\end{align*}
In particular, \(R^kf_{\ast}\mathcal{O}_X\) and \(R^kf_{\ast}\omega_{X/Y}\) are locally free.
\label{thm:kollarII-2.6}
\end{theorem}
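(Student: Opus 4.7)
The plan is to establish both isomorphisms first on the open locus $Y^0$ using standard relative Hodge theory, and then extend the identifications across the normal crossings boundary $D = Y \setminus Y^0$. On $Y^0$, the smooth proper map $f^0$ gives $R^{k+d}f^0_\ast \mathbb{Q}_{X^0}$ the structure of a polarized VHS of weight $k+d$, and the fibrewise identities $H^0(F, \omega_F) \cong F^d H^d(F, \mathbb{C})$ and $H^d(F, \mathcal{O}_F) \cong \operatorname{Gr}^0 H^d(F, \mathbb{C})$ for a smooth projective fibre $F$ of dimension $d$ immediately yield $\sh F^{k+d}(R^{k+d}f^0_\ast \mathbb{Q}_{X^0}) \cong R^k f^0_\ast \omega_{X^0/Y^0}$ and $\operatorname{Gr}^0(R^{k+d}f^0_\ast \mathbb{Q}_{X^0}) \cong R^k f^0_\ast \mathcal{O}_{X^0}$. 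It therefore remains to identify the extensions of these isomorphisms across $D$.

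To make the boundary analysis tractable, I would apply Kawamata's semistable reduction: after a finite Galois base change $\tilde Y \to Y$ ramified along $D$ and a resolution $\tilde X$ of the fibre product, we may assume the monodromy of $R^{k+d}\tilde f^0_\ast \mathbb{Q}$ around each component of the pullback of $D$ is unipotent, so upper and lower canonical extensions agree and the local geometry of $\tilde f\colon \tilde X \to \tilde Y$ is semistable. In this setting $R^k \tilde f_\ast \omega_{\tilde X/\tilde Y}$ can be computed explicitly using the relative logarithmic de Rham complex by Steenbrink-type arguments, and Schmid's nilpotent orbit theorem guarantees that the Hodge filtration extends to a filtration by holomorphic subbundles of the canonical extension, producing Steenbrink's limit MHS at boundary fibres. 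Descending from $\tilde Y$ back to $Y$ and tracking how the $[0, 2\pi)$ branch of the logarithm picks out the correct Deligne eigenspace summands yields the identification with the \emph{upper} canonical extension on the $\omega$ side; the $\mathcal{O}$ side can then be handled symmetrically, or derived from the $\omega$ side via fibrewise Grothendieck--Serre duality, which dualizes the VHS, swaps $\sh F^{k+d}$ with $\operatorname{Gr}^0$, and flips the branch convention.

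The main obstacle is precisely this last step: while it is relatively easy to check that \emph{some} canonical extension of the bottom Hodge piece agrees with $R^k f_\ast \omega_{X/Y}$ (both are reflexive sheaves with the right rank that agree on $Y^0$), pinning down that it is the \emph{upper} extension requires either a careful direct analysis of the local behaviour of sections of $\omega_{X/Y}$ on a semistable model --- essentially, that they correspond to flat sections weighted by $\exp(-\tfrac{1}{2\pi i}N\log z)v(z)$ with $\log$ chosen in the $[0,2\pi)$-branch --- or the duality argument just sketched. Once the identifications are in place, local freeness of both $R^k f_\ast \omega_{X/Y}$ and $R^k f_\ast \mathcal{O}_X$ is automatic, since canonical extensions of VHS are locally free by the nilpotent orbit theorem.
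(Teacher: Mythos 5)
This statement is not proved in the paper at all: it is Kollár's Theorem 2.6, quoted verbatim with a citation and used throughout as a black box. There is therefore no internal proof to compare your sketch against, and I can only judge whether it would stand as an independent proof of Kollár's result.

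It would not, though the outline is the right one. The identification over \(Y^0\) by classical relative Hodge theory is fine (modulo an index slip: the second isomorphism concerns \(\operatorname{Gr}^0(R^kf^0_{\ast}\mathbb{Q}_{X^0})\), in cohomological degree \(k\), not \(k+d\) as in your sketch), and you correctly isolate the crux: showing that the coherent extension \(R^kf_{\ast}\omega_{X/Y}\) of \(R^kf^0_{\ast}\omega_{X^0/Y^0}\) across \(D=Y\setminus Y^0\) is precisely the \emph{upper} canonical extension. But neither of the two ways you propose to close this gap works as stated. Agreement on \(Y^0\) plus reflexivity pins down nothing, since \(D\) has codimension one, so two reflexive (or even locally free) sheaves agreeing off a divisor can differ. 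And the relative Grothendieck--Serre duality argument only exchanges the two assertions of the theorem --- the \(\omega\)-statement with the upper extension against the \(\mathcal{O}\)-statement with the lower extension --- so it cannot establish either without an independent proof of the other; moreover, running that duality already presupposes the local freeness and splitting of \(\mathbf{R}f_{\ast}\omega_X\) that are part of what is being proved. What is actually required is the local computation you name but do not perform: an explicit description of \(f_{\ast}\omega_{X/\mathbf{D}}\) for a one-parameter degeneration in terms of the frame \(\exp\left(-\frac{1}{2\pi i}N\log z\right)v(z)\), together with the descent of that description through the Galois cover used for semistable reduction, where the decomposition into character eigensheaves is exactly what forces the \([0,2\pi)\) branch of the logarithm. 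That analysis is the substance of Kollár's proof, and in the present paper it is (appropriately) imported rather than reproved.
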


\begin{theorem}[{\cite[Theorem 3.4]{Kollar1986a}}]
Let \(X,Y,Z\) be projective varieties, \(X\) smooth, and \(f\colon X\to Y,g\colon Y\to Z\) surjective maps. Then
\begin{enumerate}
\item \(R^p(g\circ f)_\ast\omega_X\cong\bigoplus_i R^ig_\ast R^{p-i}f_\ast\omega_X\);
\item \(R^ig_{\ast}R^jf_{\ast}\omega_X\) is torsion-free;
\item \(R^ig_{\ast}R^jf_{\ast}\omega_X=0\) if \(i>\dim Y-\dim Z\);
\item In the derived category of coherent sheaves on \(Z\),
\end{enumerate}
\begin{align*}
\mathbf{R}g_{\ast}R^jf_{\ast}\omega_X=\bigoplus_iR^ig_{\ast}R^jf_{\ast}\omega_X[-i].
\end{align*}
\label{thm:kollarII-3.4}
\end{theorem}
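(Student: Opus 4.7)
The plan is to bootstrap from the single-morphism version of the decomposition theorem (Theorem 3.1 of the same paper), namely the statement $\mathbf{R}f_{\ast}\omega_X\simeq\bigoplus_j R^jf_{\ast}\omega_X[-j]$ in $D^b_{\mathrm{coh}}(Y)$ when $X$ is smooth projective, together with Corollary 2.24 of the same paper, which by a cyclic-cover construction realizes each $R^jf_{\ast}\omega_X$ as a direct summand of $h_{\ast}\omega_W$ for some auxiliary morphism $h\colon W\to Y$ with $W$ smooth projective. Once these two inputs are available the rest of the argument is derived-category bookkeeping.

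Applying the single-morphism decomposition to both $f$ and the composition $gf$ gives
\begin{align*}
\bigoplus_p R^p(gf)_{\ast}\omega_X[-p]\simeq\mathbf{R}(gf)_{\ast}\omega_X\simeq\mathbf{R}g_{\ast}\mathbf{R}f_{\ast}\omega_X\simeq\bigoplus_j\mathbf{R}g_{\ast}R^jf_{\ast}\omega_X[-j],
\end{align*}
so once I know that each $\mathbf{R}g_{\ast}R^jf_{\ast}\omega_X$ decomposes as a sum of shifts of its cohomology sheaves, matching degrees will yield (1) and (4) simultaneously. To establish the splitting of $\mathbf{R}g_{\ast}R^jf_{\ast}\omega_X$, use the cyclic-cover summand to view it as a direct summand of $\mathbf{R}g_{\ast}h_{\ast}\omega_W$. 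Applying the single-morphism decomposition to $h$ shows that $h_{\ast}\omega_W$ is a summand of $\mathbf{R}h_{\ast}\omega_W$, so $\mathbf{R}g_{\ast}h_{\ast}\omega_W$ is a summand of $\mathbf{R}(gh)_{\ast}\omega_W\simeq\bigoplus_k R^k(gh)_{\ast}\omega_W[-k]$ by a third application of the single-morphism decomposition, now to $gh$. A direct summand of a formal complex (one quasi-isomorphic to the sum of shifts of its cohomology sheaves) is itself formal, so $\mathbf{R}g_{\ast}R^jf_{\ast}\omega_X\simeq\bigoplus_i R^ig_{\ast}R^jf_{\ast}\omega_X[-i]$, giving (4); substituting into the displayed chain of isomorphisms gives (1).

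Taking cohomology sheaves in the derived-category identifications above, $R^ig_{\ast}R^jf_{\ast}\omega_X$ appears as a direct summand of $R^i(gh)_{\ast}\omega_W$, which is torsion-free by Kollár's torsion-free theorem applied to $gh$, so (2) holds. For (3), restrict to a dense open $U\subset Z$ over which $g$ is flat with fibres of dimension $\dim Y-\dim Z$; dimensional vanishing of sheaf cohomology on these fibres forces $R^ig_{\ast}R^jf_{\ast}\omega_X|_U=0$ for $i>\dim Y-\dim Z$, and torsion-freeness from (2) propagates the vanishing to all of $Z$.

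The main obstacle, the only step that is not essentially formal, is the cyclic-cover ``shifting-down'' construction producing $h\colon W\to Y$ with $R^jf_{\ast}\omega_X$ as a summand of $h_{\ast}\omega_W$. That step requires genuine geometric input, combining a branched covering construction with the torsion-free theorem to extract the summand; without it the derived-category manipulations above would have nothing to latch onto.
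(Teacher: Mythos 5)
The paper states this result purely as a citation to Kollár and gives no proof of it, so there is no internal argument to compare against. Your reconstruction is correct and is essentially Kollár's own proof of \cite[Theorem 3.4]{Kollar1986a}: the two non-formal inputs you isolate (the single-morphism decomposition \(\mathbf{R}f_{\ast}\omega_X\cong\bigoplus_j R^jf_{\ast}\omega_X[-j]\) and the covering-trick realization of \(R^jf_{\ast}\omega_X\) as a direct summand of \(h_{\ast}\omega_W\)) are exactly the ones Kollár uses, and the remaining steps are sound — a direct summand of a formal complex is formal because \(\ext^{<0}\) between coherent sheaves vanishes, so the idempotent is triangular and conjugate to its diagonal part; torsion-freeness in (2) is inherited from \(R^i(g\circ h)_{\ast}\omega_W\); and (3) follows from the vanishing of \(R^ig_{\ast}\) above the generic fibre dimension combined with (2).
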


Theorem \ref{thm:kollarII-2.6} shows that \(R^kf_{\ast}\omega_{X/Y}\), hence also \(R^kf_{\ast}\omega_X\), is globally controlled by a polarized VHS on an open subset. Observe the following.
\begin{enumerate}
\item The formation of canonical extensions is compatible with taking direct sums of VHS.
\item The open subset \(Y^0\subset Y\) in the theorem does not have to be the entire smooth locus of \(f\); any non-empty Zariski-open subset thereof suffices as long as the complement is normal crossings.
\item The category of polarizable VHS is semisimple \cite[Theorem 10.13]{Peters2008}.
\end{enumerate}
It follows that one way to get a direct sum decomposition of \(R^kf_{\ast}\omega_X\) is to construct an appropriate morphism involving the VHS \(R^{k+d}f^0_{\ast}\mathbb{Q}_{X^0}\), for some appropriate \(Y^0\) as in the theorem. This will be the mechanism for getting the splitting in Theorem \ref{thm:B-simple}.

\subsection{Chen-Jiang decompositions}
\label{sec:orgec703c2}
Let's recall the following definition and proposition from \cite{Lombardi2017}.

\begin{definition}[{\cite[Definition 4.1]{Lombardi2017}}]
Suppose \(\sh{F}\) is a coherent sheaf on an abelian variety \(A\). A Chen-Jiang decomposition of \(\sh{F}\) is a direct sum decomposition
\[
\sh F\cong\bigoplus_i \alpha_i\otimes p_i^{\ast}\sh{F}_i
\]
where each \(p_i\colon A\to A_i\) is a surjective homomorphism of abelian varieties with connected fibres, each \(\sh F_i\) is an M-regular coherent sheaf on \(A_i\) and each \(\alpha_i\in\dualab A\) is a line bundle of  finite order.
\label{def:CJ-decomp}
\end{definition}

\begin{proposition}[{\cite[Proposition 4.6]{Lombardi2017}}]
Suppose \(\sh F'\) and \(\sh F''\) are coherent sheaves on an abelian variety \(A\). If \(\sh F'\oplus\sh F''\) admits a Chen-Jiang decomposition, so do \(\sh F'\) and \(\sh F''\).
\label{prop:CJ-decomp-direct-summand}
\end{proposition}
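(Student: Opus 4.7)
The plan is to invoke the Krull--Schmidt theorem for coherent sheaves on a projective variety (Atiyah's theorem), which applies to the abelian variety $A$.

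First, I would refine the given decomposition $\sh F' \oplus \sh F'' \cong \bigoplus_i \alpha_i \otimes p_i^{\ast} \sh F_i$ so that each M-regular sheaf $\sh F_i$ on $A_i$ is indecomposable. This uses Krull--Schmidt applied on each projective variety $A_i$, together with the observation that direct summands of M-regular sheaves are M-regular: if $\sh F_i = \sh G \oplus \sh H$ then $\vanloc^k(A_i, \sh G) \subseteq \vanloc^k(A_i, \sh F_i)$ for every $k$, so the codimension inequalities in the definition of M-regularity are inherited.

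Second, I would verify that each refined summand $\alpha_i \otimes p_i^{\ast} \sh F_i$ is indecomposable as a coherent sheaf on $A$. Since $p_i\colon A \to A_i$ is a surjective homomorphism of abelian varieties with connected fibres, it is flat with fibres a fixed abelian subvariety $K_i \subset A$. As $H^0(K_i, \sh O_{K_i}) = \mathbb{C}$, the unit $\sh O_{A_i} \to (p_i)_{\ast} \sh O_A$ is a nonzero morphism of line bundles, hence an isomorphism. Combining the projection formula with adjunction then gives
\[
\hom_A(\alpha_i \otimes p_i^{\ast} \sh F_i, \alpha_i \otimes p_i^{\ast} \sh F_i) \cong \hom_{A_i}(\sh F_i, \sh F_i),
\]
which is a local ring because $\sh F_i$ is indecomposable; hence $\alpha_i \otimes p_i^{\ast} \sh F_i$ is indecomposable on $A$.

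Finally, applying Krull--Schmidt to $\sh F = \sh F' \oplus \sh F''$ on $A$, the decomposition into indecomposables is unique up to isomorphism and reordering, so each indecomposable summand of $\sh F'$ is isomorphic to one of the refined summands $\alpha_i \otimes p_i^{\ast} \sh F_i$, and similarly for $\sh F''$. Collecting these gives a Chen--Jiang decomposition of $\sh F'$ and of $\sh F''$. The step I expect to need the most care is the second one: establishing indecomposability of the pulled-back CJ terms on $A$ via the computation $(p_i)_{\ast} \sh O_A = \sh O_{A_i}$ and the resulting identification of endomorphism rings.
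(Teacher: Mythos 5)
Your proof is correct, and it follows essentially the same route as the source this paper cites for the statement: the paper itself gives no proof of Proposition \ref{prop:CJ-decomp-direct-summand}, quoting it from Lombardi--Popa--Schnell, whose argument is likewise Atiyah's Krull--Schmidt theorem combined with the observations that direct summands of M-regular sheaves are M-regular and that $p_{i\ast}\mathcal{O}_A\cong\mathcal{O}_{A_i}$ identifies $\operatorname{End}_A(\alpha_i\otimes p_i^{\ast}\sh F_i)$ with $\operatorname{End}_{A_i}(\sh F_i)$, so indecomposable terms stay indecomposable after pullback and twist.
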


The following proposition is essentially proven as part of the proof of \cite[Theorem 3.5]{Chen2018}.
\begin{proposition}
Suppose \(\sh F\) is a coherent sheaf on an abelian variety \(A\), and \(\phi\colon A'\to A\) is an isogeny. Then \(\sh F\) admits a Chen-Jiang decomposition if and only if \(\phi^{\ast}\sh F\) does.
\label{prop:CJ-decomp-isogeny}
\end{proposition}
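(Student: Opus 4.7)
The plan is to prove the two directions separately, using throughout the Fourier--Mukai characterization of M-regularity from Proposition \ref{prop:GV-and-FM}.

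For the forward direction, I would pull back the assumed decomposition $\sh F\cong\bigoplus_i\alpha_i\otimes p_i^{\ast}\sh F_i$ term by term, noting that the pullback of a torsion line bundle along $\phi$ is again torsion. Stein-factoring each homomorphism $p_i\phi\colon A'\to A_i$ as a surjective homomorphism $A'\to A_i''$ with connected kernel followed by an isogeny $\psi_i\colon A_i''\to A_i$, it remains to check that $\psi_i^{\ast}\sh F_i$ is M-regular. By Propositions \ref{prop:GV-and-FM} and \ref{prop:functoriality-of-FM}, its Fourier--Mukai transform is $\hat\psi_{i\ast}\fmt_{A_i}(\sh F_i)$, a pushforward of a torsion-free sheaf along the finite flat morphism $\hat\psi_i$, which is again torsion-free.

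For the reverse direction, observe that $\sh F$ is a direct summand of $\phi_\ast\phi^\ast\sh F$, since the composition $\sh F\to\phi_\ast\phi^\ast\sh F\to\sh F$ of adjunction unit and trace map is multiplication by the invertible degree $\deg\phi$. By Proposition \ref{prop:CJ-decomp-direct-summand} it therefore suffices to show that $\phi_\ast\phi^\ast\sh F$ has a CJ decomposition, which in turn reduces to showing that each pushforward $\phi_\ast(\beta_j\otimes q_j^\ast\sh G_j)$ of the assumed decomposition $\phi^\ast\sh F\cong\bigoplus_j\beta_j\otimes q_j^\ast\sh G_j$ does so. Applying $\fmt_A$ and using Propositions \ref{prop:functoriality-of-FM} and \ref{prop:translation-of-FM},
\[
\fmt_A\bigl(\phi_\ast(\beta_j\otimes q_j^\ast\sh G_j)\bigr)\cong\hat\phi^\ast(t_{\beta_j})_\ast\hat q_{j\ast}\fmt_{A'_j}(\sh G_j),
\]
the pullback along the isogeny $\hat\phi\colon\hat A\to\hat A'$ of a torsion-free sheaf supported on the torsion translate $\beta_j+\hat q_j(\hat A'_j)\subset\hat A'$ of an abelian subvariety.

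The preimage $\hat\phi^{-1}(\beta_j+\hat q_j(\hat A'_j))$ is a finite disjoint union of torsion translates of a single abelian subvariety of $\hat A$: the preimage of the subgroup $\hat q_j(\hat A'_j)$ under $\hat\phi$ is a closed subgroup of $\hat A$ whose identity component is an abelian subvariety and whose finite component group admits torsion representatives (by divisibility of the identity component), while the preimage of $\beta_j$ is a torsion coset of $\ker\hat\phi$. Flat base change along $\hat\phi$ expresses the pullback as a direct sum of torsion-free sheaves, one on each component, and reversing the Fourier--Mukai dictionary via Proposition \ref{prop:GV-and-FM} produces the required CJ decomposition of $\phi_\ast(\beta_j\otimes q_j^\ast\sh G_j)$. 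The main obstacle is this structural analysis of the preimage; the remainder is formal bookkeeping together with the standard fact that finite flat morphisms preserve torsion-freeness under both pushforward and pullback.
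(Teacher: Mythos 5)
Your proof is correct and follows essentially the same route as the paper: the reverse direction reduces to \(\phi_{\ast}\phi^{\ast}\sh F\) via the direct-summand property and then translates everything through the Fourier--Mukai dictionary of Propositions \ref{prop:GV-and-FM}--\ref{prop:translation-of-FM}, analyzing the preimage under \(\dualab\phi\) of a torsion translate of an abelian subvariety. If anything, you are more careful than the paper in two places it glosses over: the forward direction (which the paper dismisses as clear, while you correctly Stein-factor \(p_i\circ\phi\) and verify M-regularity of \(\psi_i^{\ast}\sh F_i\) via torsion-freeness of the pushforward along \(\dualab\psi_i\)) and the fact that \(\dualab\phi^{-1}\) of an abelian subvariety may be disconnected, requiring torsion representatives for the component group.
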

\begin{proof}
If \(\sh F\) admits a Chen-Jiang decomposition then clearly so does \(\phi^{\ast}\sh F\).

In the other direction note that by Propositions \ref{prop:GV-and-FM}, \ref{prop:functoriality-of-FM}, and \ref{prop:translation-of-FM}, a Chen-Jiang decomposition of \(\sh \phi^{\ast}F\) is equivalent to a decomposition
\begin{align*}
\fmt_{A'}(\phi^{\ast}\sh F)\cong\bigoplus_i \tau_{\alpha_{i}\ast}\iota_{i\ast}\sh G_i
\end{align*}
where for each \(i\), \(\tau_{\alpha_i}\) is a translation of \(A'\) by a point \(\alpha_i\) of finite order, \(\iota_i\colon\dualab A_i\to\dualab A'\) the inclusion of an abelian subvariety, and \(\sh G_i\) is a torsion free sheaf on \(\dualab A_i\).

Now for each \(i\), \(\dualab\phi^{\ast}\tau_{\alpha_i\ast}\iota_{i\ast}\sh G_i\) is the direct image of a torsion free sheaf on \(\dualab\phi^{-1}(\dualab A_i)\), which is again a torsion translate of an abelian subvariety of \(\dualab A\); namely the direct image of \(\left(\dualab\phi|_{\dualab\phi^{-1}(\dualab A_i)}\right)^{\ast}\sh G_i\) translated by a preimage of \(\alpha_i\).

By Proposition \ref{prop:functoriality-of-FM} and since \(\phi\) is an isogeny we have 
\[\dualab\phi^{\ast}\fmt_{A'}(\phi^{\ast}\sh F)=\fmt_A(\phi_{\ast}\phi^{\ast}\sh F),
\]
 so \(\phi_{\ast}\phi^{\ast}\sh F\) admits a Chen-Jiang decomposition. But \(\sh F\) is a direct summand thereof, hence admits a Chen-Jiang decomposition by Proposition \ref{prop:CJ-decomp-direct-summand}.
\end{proof}

Given a morphism \(f\colon X\to A\) to an abelian variety, we will need to understand how the image \(f(X)\) relates to the various components of the cohomology support loci of \(f_{\ast}\omega_X\).

\begin{lemma}
Suppose given \(f\colon X\to A\) where \(X\) is a smooth projective variety, \(A\) an abelian variety, and suppose \(\dualab B\subset\dualab A\) is a codimension \(k\) component of \(\vanloc^k(A,f_{\ast}\omega_X)\) which passes through \(0\in \dualab A\) (and is hence an abelian subvariety by Theorem \ref{thm:canonical-bundles-are-GV}). Let \(p\colon A\to B\) be dual to the inclusion. Then all fibres of \(f(X)\) over \(B\) are of dimension \(k\), hence \(f(X)\) is the preimage of \(p(f(X))\). In particular \(p|_{f(X)}\colon f(X)\to p(f(X))\) is a smooth fibration with trivial relative canonical bundle.
\label{lemma:f(X)-is-abelian-fibration}
\end{lemma}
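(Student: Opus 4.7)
My plan is to show that the top direct image $R^k p_\ast f_\ast\omega_X$ is non-zero and has support equal to all of $p(f(X))$. Combined with base-change for top direct images, this will force every fibre of $p$ over $p(f(X))$ to lie inside $f(X)$, so that $f(X) = p^{-1}(p(f(X)))$; the restriction $p|_{f(X)}$ will then be obtained by pullback from the smooth abelian fibration $p$, which will yield the remaining smoothness and triviality claims.

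\textbf{Non-vanishing of $R^k p_\ast f_\ast\omega_X$.} By Theorem~\ref{thm:kollarII-3.4}(iv) the derived pushforward $\mathbf{R}p_\ast f_\ast\omega_X$ splits as $\bigoplus_j R^jp_\ast f_\ast\omega_X[-j]$, so the projection formula gives, for all $\beta\in\dualab B$,
\[
H^k(A,f_\ast\omega_X\otimes p^\ast\beta)\;\cong\;\bigoplus_{i+j=k}H^i\bigl(B,R^jp_\ast f_\ast\omega_X\otimes\beta\bigr).
\]
Each summand $R^jp_\ast f_\ast\omega_X$ is a direct summand of $R^j(p\circ f)_\ast\omega_X$ by Theorem~\ref{thm:kollarII-3.4}(i), hence a GV-sheaf on $B$ by Theorem~\ref{thm:canonical-bundles-are-GV}. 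For generic $\beta\in\dualab B$ the terms with $i>0$ therefore vanish, and the hypothesis $\dualab B\subseteq\vanloc^k$ then forces $R^kp_\ast f_\ast\omega_X\neq 0$.

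\textbf{Support analysis.} Let $K=\ker p$, an abelian subvariety of $A$ of dimension $k$, and $K_b=p^{-1}(b)$ for $b\in B$. Since $p$ has relative dimension $k$, the functor $R^kp_\ast$ commutes with base change, giving
\[
(R^kp_\ast f_\ast\omega_X)\otimes k(b)\;\cong\;H^k(K_b,f_\ast\omega_X|_{K_b}),
\]
which vanishes unless $K_b\subseteq f(X)$, since otherwise the restriction is supported on the proper subvariety $f(X)\cap K_b\subsetneq K_b$ of dimension strictly less than $k$. Thus $\supp R^kp_\ast f_\ast\omega_X\subseteq\{b\in B:K_b\subseteq f(X)\}\subseteq p(f(X))$. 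To see the support actually equals $p(f(X))$, I would factor $f$ through its image as $X\xrightarrow{f_0}f(X)\xrightarrow{\iota}A$ and $p\circ\iota=\iota'\circ\bar p$, with $\bar p\colon f(X)\to p(f(X))$ surjective and $\iota'\colon p(f(X))\hookrightarrow B$ the inclusion. Because $\iota$ and $\iota'$ are closed embeddings, $R^kp_\ast f_\ast\omega_X\cong\iota'_\ast R^k\bar p_\ast f_{0\ast}\omega_X$, and Theorem~\ref{thm:kollarII-3.4}(ii) applied to the surjective morphisms $f_0,\bar p$ of projective varieties with $X$ smooth shows that $R^k\bar p_\ast f_{0\ast}\omega_X$ is torsion-free on the integral variety $p(f(X))$. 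Being non-zero by the previous paragraph, it must have full support, so $K_b\subseteq f(X)$ for every $b\in p(f(X))$.

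\textbf{Conclusion and expected obstacle.} From $f(X)=p^{-1}(p(f(X)))$ it follows that $p|_{f(X)}$ is the pullback of $p$ along $p(f(X))\hookrightarrow B$, hence a smooth fibration with fibres isomorphic to $K$; the relative canonical bundle is trivial because $\omega_K\cong\mathcal O_K$. The main technical point (and the step I expect to require the most care) is the torsion-freeness argument: the varieties $f(X)$ and $p(f(X))$ are typically singular, but Kollár's theorem only requires the source $X$ to be smooth, which is exactly what allows one to apply it to $f_0$ and $\bar p$ and deduce that the relevant top direct image is automatically supported on all of $p(f(X))$.
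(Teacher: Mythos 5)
Your argument is correct and follows essentially the same route as the paper's proof: non-vanishing of \(R^kp_{\ast}f_{\ast}\omega_X\) via Kollár's decomposition together with the GV property of the summands, then Kollár's torsion-freeness over \(p(f(X))\) combined with base change for the top direct image to force \(K_b\subseteq f(X)\). The only (harmless) differences are that you invoke base change at every point of \(B\) where the paper uses generic base change and lets closedness of \(f(X)\) handle the special fibres, and that the triviality of the relative canonical bundle is more cleanly seen as the restriction of the trivial line bundle \(\omega_{A/B}\) than via the fibrewise isomorphism \(\omega_K\cong\mathcal{O}_K\).
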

\begin{proof}
Observe that \(p\) is smooth of relative dimension \(k\), so it suffices to show that a general fibre of \(p|_{f(X)}\) has dimension \(k\). Suppose \(\beta\in\dualab B\). By Kollár's result (Theorem \ref{thm:kollarII-3.4}), we have
\begin{align*}
h^k(A,f_{\ast}\omega_X\otimes p^{\ast}\beta)&=\sum_{i=0}^kh^i(B,R^{k-i}p_{\ast}f_{\ast}\omega_X\otimes\beta)
\end{align*}
The left hand side is non-zero by the assumptions on \(B\), while for general \(\beta\in\dualab B\), the terms with \(i>0\) on the right hand side vanish since \(R^{k-i}p_{\ast}f_{\ast}\omega_X\) is a direct summand of \(R^{k-i}(p\circ f)_{\ast}\omega_X\) (by Theorem \ref{thm:kollarII-3.4} again), which is a GV-sheaf by Theorem \ref{thm:canonical-bundles-are-GV}. It follows that \(h^0(B,R^kp_{\ast}f_{\ast}\omega_X\otimes\beta)\) is non-zero, hence that \(R^kp_{\ast}f_{\ast}\omega_X\) is non-zero.

To conclude, recall that \(R^k(p\circ f)_{\ast}\omega_X\), hence the summand \(R^kp_{\ast}f_{\ast}\omega_X\), is torsion-free over the image of \(f\) in \(B\) (Theorem \ref{thm:kollarII-3.4}). By base change over the smooth locus of \(f\), this \(k^{\text{th}}\) higher direct image would vanish if the general fibre had dimension smaller than \(k\), hence the general fibre actually has dimension \(k\).
\end{proof}
\subsection{Generic base change}
\label{sec:org1115af6}
Finally we will need the following generic base change theorem. Suppose \(X\xrightarrow f Y\xrightarrow g Z\) are proper morphisms of schemes of finite type over a field, that \(Z\) is generically reduced, and that \(h=g\circ f\) is surjective. Let \(\sh F\) be a coherent sheaf on \(X\). For \(z\in Z\), let \(\fibreXz=h^{-1}(z)\) and \(\fibreYz=g^{-1}(z)\)  be the fibres over \(z\), and consider \(f|_{\fibreXz}\colon\fibreXz\to\fibreYz\).
\begin{proposition}[{\cite[Proposition 5.1]{Lombardi2017}}]
In the setting above, there is a non-empty Zariski-open subset \(U\subset Z\) such that the base change morphism
\begin{align*}
\left(R^if_{\ast}\sh F\right)\!|_{\fibreYz}\to R^i(f|_{\fibreXz})_{\ast}\!\left(\sh F|_{\fibreXz}\right)
\end{align*}
is an isomorphism of sheaves on \(\fibreYz\) for every \(z\in U\) and every \(i\).
\label{prop:generic-base-change}
\end{proposition}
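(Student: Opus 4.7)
The plan is to combine generic flatness with the derived base change formula. Since $Z$ is generically reduced, generic flatness yields a non-empty Zariski-open $U\subset Z$ over which all of the following hold simultaneously: (a) $h=g\circ f$ is flat; (b) $g$ is flat; (c) $\sh F|_{h^{-1}(U)}$ is flat over $U$ via $h$; and (d) each of the finitely many nonzero sheaves $R^if_{\ast}\sh F$ is flat over $U$ via $g$. Each of these is a generic condition on $Z$---the last two directly from generic flatness of coherent sheaves, the first two from generic flatness of the structure sheaves of $X$ and $Y$---so we pass to a common open on which all hold.

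Fix $z\in U$ and let $\iota\colon\fibreYz\hookrightarrow Y$ and $\iota'\colon\fibreXz\hookrightarrow X$ denote the fibre inclusions. Conditions (a) and (b) imply Tor-independence of $X$ and $\fibreYz$ over $Y$: using $\mathcal{O}_{\fibreYz}=\mathcal{O}_Y\otimes_{\mathcal{O}_Z}\kappa(z)$ together with the flatness of $g$, one finds $\mathcal{O}_X\otimes^{\mathbf{L}}_{\mathcal{O}_Y}\mathcal{O}_{\fibreYz}=\mathcal{O}_X\otimes^{\mathbf{L}}_{\mathcal{O}_Z}\kappa(z)$, which by (a) has no higher Tor contributions. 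Hence the derived base change formula produces an isomorphism
\[
\mathbf{L}\iota^{\ast}\mathbf{R}f_{\ast}\sh F\xrightarrow{\;\sim\;}\mathbf{R}(f|_{\fibreXz})_{\ast}\mathbf{L}\iota'^{\ast}\sh F.
\]

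It remains to collapse the derived pullbacks to ordinary ones. The analogous change-of-rings argument combined with (c) gives $\mathbf{L}\iota'^{\ast}\sh F=\iota'^{\ast}\sh F=\sh F|_{\fibreXz}$, while (b) and (d) yield $\mathbf{L}\iota^{\ast}R^if_{\ast}\sh F=(R^if_{\ast}\sh F)|_{\fibreYz}$ for every $i$. The latter makes the hypercohomology spectral sequence $E_2^{p,q}=L^p\iota^{\ast}R^qf_{\ast}\sh F\Rightarrow H^{p+q}(\mathbf{L}\iota^{\ast}\mathbf{R}f_{\ast}\sh F)$ degenerate, giving $H^i(\mathbf{L}\iota^{\ast}\mathbf{R}f_{\ast}\sh F)=(R^if_{\ast}\sh F)|_{\fibreYz}$. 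Taking $H^i$ of the displayed isomorphism then produces the desired
\[
(R^if_{\ast}\sh F)|_{\fibreYz}\xrightarrow{\;\sim\;}R^i(f|_{\fibreXz})_{\ast}(\sh F|_{\fibreXz}).
\]
The main obstacle is the careful bookkeeping needed to confirm that all four flatness conditions can be arranged on a single dense open, and to verify that the isomorphism constructed by the route above agrees with the canonical base change morphism; the latter follows from standard compatibilities of base change maps with the six-functor formalism.
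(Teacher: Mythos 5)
Your argument is correct, but note that the paper does not actually prove this proposition --- it is quoted verbatim from Lombardi--Popa--Schnell \cite[Proposition 5.1]{Lombardi2017}, so there is no internal proof to compare against. Their original argument is a more hands-on one (generic flatness plus an explicit relative version of Grothendieck's complex computing $\mathbf{R}f_{\ast}\sh F$); your route through Tor-independent derived base change reaches the same conclusion more cleanly, and all the ingredients check out: conditions (a) and (b) do give $\mathcal{O}_X\otimes^{\mathbf{L}}_{\mathcal{O}_Y}\mathcal{O}_{\fibreYz}\simeq\mathcal{O}_X\otimes^{\mathbf{L}}_{\mathcal{O}_Z}\kappa(z)$ concentrated in degree $0$, which is exactly the Tor-independence needed for the derived base change isomorphism for arbitrary objects of $D_{\mathrm{QCoh}}$; conditions (a)+(c) and (b)+(d) collapse the two derived pullbacks; and the hypercohomology spectral sequence then degenerates as you say. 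Two small points deserve a sentence each if you write this up. First, generic flatness requires a reduced (Noetherian) base, while $Z$ is only generically reduced, so you should explicitly shrink to the reduced locus (and, if $Z$ is reducible, note that a single non-empty open suffices for the statement as given; in the paper's applications $Z$ is irreducible). Second, the compatibility of your isomorphism with the canonical underived base change morphism is genuinely standard --- the derived base change map induces the classical one on cohomology sheaves under the identifications you made --- but since the proposition asserts that a \emph{specific} morphism is an isomorphism, that sentence should not be omitted.
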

In particular, if \(X,Y\) and \(Z\) are smooth projective and \(\sh F\) is the relative canonical bundle \(\omega_{X/Y}=\omega_X\otimes f^{\ast}\omega_Y^{-1}\), this says that the restriction \((f_{\ast}\sh F)|_{\fibreYz}\) is, for general \(z\in Z\), isomorphic to the relative canonical bundle \(\omega_{\fibreXz/\fibreYz}\) of the morphism \(f|_{\fibreXz}\), and similarly for the higher direct images.

\section{Chen-Jiang decompositions for direct images of canonical bundles}
\label{sec:org337de26}
The goal of this section is to prove that Chen-Jiang decompositions always exist for direct images of canonical bundles, following the approach originally used in \cite{Chen2018} to give the decompositions for generically finite morphisms.

Let us first recall the original proof of \cite[Theorem 3.5]{Chen2018}, namely that if \(f\colon X\to A\) is a generically finite morphism to an abelian varietythen \(f_{\ast}\omega_X\) admits a Chen-Jiang decomposition. First, by Theorem \ref{thm:canonical-bundles-are-GV} and Proposition \ref{prop:CJ-decomp-isogeny} it suffices to assume that all components of the cohomology support loci \(\vanloc^k(A,f_{\ast}\omega_X)\) for every \(k\) passes through the origin of \(A\). Indeed choose a finite order point in each such component, then choose an isogeny \(\phi\colon A'\to A\) such that each of those finite points get mapped to the origin of \(\dualab{A'}\) under \(\phi^{\ast}\). If \(X'=X\times_AA'\) and \(f'\colon X'\to A'\) is the second projection, then \(\phi^{\ast}f_{\ast}\omega_X=f'_{\ast}\omega_{X'}\), and the components of \(\vanloc^k(A',f'_{\ast}\omega_{X'})\) all pass through the origin of \(A'\).

Chen and Jiang's proof of  \cite[Theorem 3.4]{Chen2018} is then essentially to show the following result, and prove that the conditions are satisfied when \(f\) is generically finite.
\begin{lemma}
Assume all components of each \(\vanloc^k(A,f_{\ast}\omega_X)\) pass through \(0\in A\). Suppose that for each \(k>0\), and for each component \(\dualab{B}\) of \(\vanloc^k(A,f_{\ast}\omega_X)\) of codimension \(k\), there exists an M-regular sheaf \(\sh F_B\) on \(B\) with the following properties.
\begin{enumerate}
\item If \(p_B\colon A\to B\) is dual to the inclusion \(\dualab{B}\to\dualab{A}\), then \(f_{\ast}\omega_X\) admits \(p_B^{\ast}\sh F_B\) as a direct summand.
\item For general \(\beta\in\dualab{B}\), \(h^k(A,f_{\ast}\omega_X\otimes p_B^{\ast}\beta)=h^0(B,\sh F_B\otimes \beta)\)
\end{enumerate}
Then \(f_{\ast}\omega_X\) admits a Chen-Jiang decomposition.
\label{lemma:CJ-decomp-from-individual-decomps}
\end{lemma}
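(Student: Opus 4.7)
The plan is to iteratively split off the summands $p_B^* \sh F_B$, producing a decomposition $f_*\omega_X \cong \bigoplus_B p_B^* \sh F_B \oplus \sh G$ with $\sh G$ M-regular on $A$; taking $\mathrm{id}_A$ as the projection for the last summand and all twists trivial, this is a Chen-Jiang decomposition. To carry out the iteration, I would formulate a slight generalization of the Lemma for any sheaf $\sh F$ on $A$ satisfying the same two hypotheses (with $\sh F$ replacing $f_*\omega_X$), and prove it by induction on the total number of codimension-$k$ components of $V^k(A, \sh F)$ summed over $k > 0$. The base case is when this count is zero, i.e., $\sh F$ is M-regular.

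For the inductive step, I would pick a component $\hat B$ of \emph{maximal codimension} among all codim-$k$ components. This choice guarantees that no other such component $\hat{B'}$ can be contained in $\hat B$: a strict inclusion $\hat{B'} \subsetneq \hat B$ would force $\dim \hat{B'} < \dim \hat B$, contradicting the maximality. Hypothesis (1) gives $\sh F \cong p_B^* \sh F_B \oplus \sh F_1$; I then need to verify that the remaining hypotheses transfer to $\sh F_1$ so that the induction can close.

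The main technical input is a hom-vanishing statement: $\hom_A(p_{B'}^* \sh F_{B'}, p_B^* \sh F_B) = 0$ for each other component $\hat{B'}$. Granting this, the inclusion $p_{B'}^* \sh F_{B'} \hookrightarrow \sh F = p_B^* \sh F_B \oplus \sh F_1$ has trivial component onto $p_B^* \sh F_B$ and hence factors through $\sh F_1$, realizing $p_{B'}^* \sh F_{B'}$ as a direct summand of $\sh F_1$. To prove the vanishing, Propositions~\ref{prop:GV-and-FM} and~\ref{prop:functoriality-of-FM} give $\fmt_A(p_B^* \sh F_B) \cong \dualab{p_B}_* \fmt_B(\sh F_B)$, a coherent sheaf on $\dualab{A}$ supported on $\dualab{B}$ and torsion-free there by M-regularity of $\sh F_B$, and similarly for $\dualab{B'}$. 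Any morphism $\dualab{p_B}_*\fmt_B(\sh F_B) \to \dualab{p_{B'}}_*\fmt_{B'}(\sh F_{B'})$ must have image supported on $\dualab{B} \cap \dualab{B'}$, a proper subset of $\dualab{B'}$ by the choice of $\dualab B$; torsion-freeness then forces the image, and hence the morphism itself, to vanish. The contravariant equivalence $\fmt$ then translates this to the desired hom-vanishing on $A$.

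To transfer hypothesis (2) to $\sh F_1$, I would show that $h^{k'}(A, p_B^* \sh F_B \otimes p_{B'}^* \beta) = 0$ for general $\beta \in \dualab{B'}$ and subtract this from hypothesis (2) for $\sh F$. Via the projection formula for $p_B$, one splits into cases depending on whether $p_{B'}^* \beta$ restricts trivially or non-trivially to the abelian fibres of $p_B$: the non-trivial case forces $\mathbf{R}p_{B*}(p_{B'}^*\beta) = 0$, while the trivial case corresponds to $\dualab{B'} \subsetneq \dualab{B}$, where a Leray-type computation using the M-regularity of $\sh F_B$ and the positive codimension of $\dualab{B'}$ in $\dualab{B}$ gives the vanishing. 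With both hypotheses transferred to $\sh F_1$, the induction closes. The main obstacle in the plan is the Fourier-Mukai-based hom-vanishing; the remaining steps are routine bookkeeping with cohomology support loci.
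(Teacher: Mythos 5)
Your proposal is correct and follows essentially the same route as the paper, whose proof simply invokes Steps 2 and 3 of Chen--Jiang's proof of \cite[Theorem 3.4]{Chen2018}: the Fourier--Mukai hom-vanishing between pullbacks of M-regular sheaves from distinct sub-tori (via torsion-freeness of the transforms on their supports) and the resulting splitting-off of the summands \(p_B^{\ast}\sh F_B\) are precisely the content of those steps. The one piece of ``bookkeeping'' worth making explicit is that after splitting \(\sh F\cong p_B^{\ast}\sh F_B\oplus\sh F_1\), the subvariety \(\dualab B\) itself must cease to be a codimension-\(k\) component of \(\vanloc^k(A,\sh F_1)\) --- otherwise your induction count does not decrease and the final complement is not M-regular --- and this is exactly where hypothesis (2) is used, via the computation \(h^k(A,p_B^{\ast}\sh F_B\otimes p_B^{\ast}\beta)=h^0(B,\sh F_B\otimes\beta)\) for general \(\beta\in\dualab B\), which follows from the projection formula, \(\mathbf{R}p_{B\ast}\mathcal{O}_A\cong\bigoplus_j\mathcal{O}_B^{\binom{k}{j}}[-j]\), and M-regularity of \(\sh F_B\).
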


\begin{proof}
Following the notation of \cite{Chen2018}, let \(S^k_X\) denote the set of codimension  \(k\) components of \(\vanloc^k(A,f_{\ast}\omega_X)\). Then Step 2 of the proof of \cite[Theorem 3.4]{Chen2018} applies verbatim to show that there exists a decomposition
\[
f_{\ast}\omega_X\cong \sh W\oplus\bigoplus_{k>0,\dualab B\in S^k_X}p_B^{\ast}\sh F_B
\]
It remains to show that \(\sh W\) is M-regular. This follows from the arguments of Step 3 of the proof of \cite[Theorem 3.4]{Chen2018} and the second point in the statement of this lemma.
\end{proof}

As outlined in the introduction, Chen and Jiang critically use the fact that if \(f\colon X\to Y\) is generically finite and surjective, then \(f_{\ast}\omega_X\) is a direct summand of \(\omega_Y\). The main new result is Theorem \ref{thm:B-simple}, which serves as a generalization of this statement to arbitrary morphisms. We defer the proof to Section \ref{sec:orgd1db5f9}, but recall the statement here.

\firstthmB*

Recall the statement of Theorem \ref{thm:A} as well.

\firstthmA*

\begin{corollary}
Suppose \(f\colon X\to A\) is a morphism from a smooth projective variety \(X\) to an abelian variety \(A\). Then \(R^if_{\ast}\omega_X\) admits a Chen-Jiang decomposition on \(A\) for all \(i\).
\label{cor:thm:A}
\end{corollary}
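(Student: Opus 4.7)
The plan is to reduce to the case $i=0$ (i.e.\ to Theorem \ref{thm:A}) via Kollár's result. Specifically, by \cite[Corollary 2.24]{Kollar1986a}, for every $i$ there exists a smooth projective variety $Y$ together with a morphism $g\colon Y\to A$ (namely a suitable birational modification of a cover of $X$) such that $R^if_{\ast}\omega_X$ is a direct summand of $g_{\ast}\omega_Y$. This is exactly the reduction already invoked in the discussion following Theorem \ref{thm:canonical-bundles-are-GV} to deduce the GV-property for higher direct images from the case of ordinary direct images.

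Given such a $g\colon Y\to A$, I would apply Theorem \ref{thm:A} to $g$ to obtain a Chen-Jiang decomposition of $g_{\ast}\omega_Y$. Since $R^if_{\ast}\omega_X$ is a direct summand of $g_{\ast}\omega_Y$, Proposition \ref{prop:CJ-decomp-direct-summand} then yields a Chen-Jiang decomposition of $R^if_{\ast}\omega_X$ itself. This completes the proof.

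There is essentially no obstacle: all the work is already done in Theorem \ref{thm:A} and in Kollár's splitting result, and Proposition \ref{prop:CJ-decomp-direct-summand} is precisely designed to transfer Chen-Jiang decompositions to direct summands. The only thing to verify is that the reduction from higher direct images to direct summands of ordinary pushforwards of canonical bundles is valid in our situation; this is standard and identical to the reduction already used for the GV-property.
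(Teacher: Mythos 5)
Your proof is correct and is essentially identical to the paper's: invoke Kollár's splitting of $R^if_{\ast}\omega_X$ as a direct summand of $g_{\ast}\omega_Y$, apply Theorem \ref{thm:A} to $g$, and conclude with Proposition \ref{prop:CJ-decomp-direct-summand}. The paper additionally records that one may take $\dim Y=\dim X-i$ for $i>0$, which is not needed for the corollary itself but is what makes the mutual reliance between Theorem \ref{thm:A} and this corollary a legitimate induction on dimension (see Remark \ref{rem:thm:A}).
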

\begin{proof}
For \(i>0\), there exists by work of Kollár \cite[Corollary 2.24]{Kollar1986a} a smooth variety \(Z\) with \(\dim Z=\dim X-i\) and a morphism \(\phi\colon Z\to A\) such that \(R^if_{\ast}\omega_X\) is a direct summand of \(\phi_{\ast}\omega_Z\) (the claim about the dimension of \(Z\) follows from the proof of Kollár's corollary; in fact \(Z\) is a generic intersection of \(i\) hyperplane sections of some birational model of \(X\)). The result follows by Proposition \ref{prop:CJ-decomp-direct-summand}.
\end{proof}

\begin{remark}
The proof of Corollary \ref{cor:thm:A} for a fixed \(X\) and \(i>0\) only relies on Theorem \ref{thm:A} in the case of varieties with strictly smaller dimension than \(X\). In the proof of Theorem \ref{thm:A}, we can thus assume that \(R^if_{\ast}\omega_X\) admits a Chen-Jiang decomposition for all \(i>0\) by induction on \(\dim X\).
\label{rem:thm:A}
\end{remark}

\begin{proof}[{Proof of Theorem \ref{thm:A}}]
By Theorem \ref{thm:canonical-bundles-are-GV} and Proposition \ref{prop:CJ-decomp-isogeny}, we can assume that all components of the cohomology support loci \(\vanloc^k(A,f_{\ast}\omega_X)\) are in fact abelian subvarieties. It suffices to verify the conditions of Lemma \ref{lemma:CJ-decomp-from-individual-decomps}. Namely, suppose \(\dualab B\) is a codimension \(k\) component of \(\vanloc^k(A,f_{\ast}\omega_X)\) with \(k>0\), and \(p\colon A\to B\) is dual to the inclusion \(\dualab B\subset\dualab A\). Then we must show that there exists an M-regular sheaf \(\sh F\) on \(B\) such that \(h^k(A,f_{\ast}\omega_X\otimes p^{\ast}\beta)=h^0(B,\sh F\otimes \beta)\) for general \(\beta\in\dualab B\), and that \(f_{\ast}\omega_X\) admits \(p^{\ast}\sh F\) as a direct summand.

Let \(Y\subset A\) be the image of \(f\), and \(p(Y)=Z\subset B\).  Let \(g\colon Y\to Z\) denote the restriction of \(p\) and \(q=g\circ f\). By Lemma \ref{lemma:f(X)-is-abelian-fibration}, \(g\) is a smooth fibration of relative dimension \(k\), and \(\omega_{Y/Z}\) is trivial. Now let \(\pi_Z\colon Z'\to Z\) be a resolution of singularities of \(Z\), and construct the following diagram by pullback.

\begin{equation*}
\begin{tikzcd}
X'\ar[r,"f'"]\ar[d,"\pi_X"]&Y'\ar[r,"g'"]\ar[d,"\pi_Y"]&Z'\ar[d,"\pi_Z"]\\
X\ar[r,"f"]&Y\ar[r,"g"]&Z
\end{tikzcd}
\end{equation*}

Then \(g'\) is again a smooth fibration of relative dimension \(k\), and \(\omega_{Y'/Z'}\) is trivial. By Theorem \ref{thm:B-simple} applied to the sequence \(X'\xrightarrow {f'} Y'\xrightarrow {g'} Z'\), we can now conclude that \(f'_{\ast}\omega_{X'}\) admits \(g'^{\ast}R^kg'_{\ast}f'_{\ast}\omega_{X'}\) as a direct summand. Pushing forward to \(Y\) it follows that \(f_{\ast}\omega_X\) admits \(g^{\ast}R^kg_{\ast}f_{\ast}\omega_X\) as a direct summand, by flat base change along \(g\) and the fact that \(\pi_{X\ast}\omega_{X'}=\omega_X\) since \(\pi_X\) is birational.

To apply Lemma \ref{lemma:CJ-decomp-from-individual-decomps}, it then suffices to show that \(R^kg_{\ast}f_{\ast}\omega_X\) admits as direct summand an M-regular sheaf \(\sh F\) such that \(h^0(B,R^kp_{\ast}f_{\ast}\omega_X\otimes \beta)=h^0(B,\sh F\otimes \beta)\) for general \(\beta\in\dualab B\). It suffices to show that \(R^kg_{\ast}f_{\ast}\omega_X\) admits a Chen-Jiang decomposition as a sheaf on \(B\), as we can then take \(\sh F\) to be the M-regular summand of the decomposition.

But \(R^kg_{\ast}f_{\ast}\omega_X\) is a direct summand of \(R^kq_{\ast}\omega_X\) by Kollár's result (Theorem \ref{thm:kollarII-3.4}). By dimensional induction and Corollary \ref{cor:thm:A} (see Remark \ref{rem:thm:A}), \(R^kq_{\ast}\omega_X\) admits a Chen-Jiang decomposition on \(B\), hence so does \(R^kg_{\ast}f_{\ast}\omega_X\) by Proposition \ref{prop:CJ-decomp-direct-summand}.
\end{proof}
\section{Splitting of direct images of canonical bundles}
\label{sec:orgd1db5f9}
The goal of this section is to prove Theorem \ref{thm:B-simple}. More precisely, suppose given surjective morphisms of smooth varieties \(X\xrightarrow f Y\xrightarrow g Z\) where \(g\) is flat. Let \(q=g\circ f\). We will construct a morphism \(\relgx \colon f_{\ast}\omega_X\to g^{!}R^kg_{\ast}f_{\ast}\omega_X[-k]\) where \(k=\dim Y-\dim Z\); as \(g\) is flat, this is actually a map of sheaves. We will then show that this morphism is split surjective in the setting of Theorem \ref{thm:B-simple}, using the results by Kollár outlined in Section \ref{sec:org27b505d}.

\subsection{Relative Gysin morphism for canonical bundles}
\label{sec:org67f5d87}
       To construct the desired morphism, note first that \(R^ig_{\ast}f_{\ast}\omega_X\) vanishes for \(i>k\) by Kollár's result (Theorem \ref{thm:kollarII-3.4}), hence \(\mathbf{R}g_{\ast}f_{\ast}\omega_X\), as an object of the derived category of coherent sheaves on \(Z\), is concentrated in degrees \(0\) to \(k\). Thus the projection to the \(k^{\text{th}}\) cohomology sheaf gives a map \(\mathbf{R}g_{\ast}f_{\ast}\omega_X\to R^kg_{\ast}f_{\ast}\omega_X[-k]\). By adjunction this corresponds to a morphism 
\[
\relgx \colon f_{\ast}\omega_X\to g^{!}R^kg_{\ast}f_{\ast}\omega_X[-k].
\]
Since \(Y\) and \(Z\) are smooth and \(g\) is flat of relative dimension \(k\), we have 
\[
g^{!}R^kg_{\ast}f_{\ast}\omega_X[-k]\cong \omega_{Y/Z}\otimes g^{\ast}R^kg_{\ast}f_{\ast}\omega_X.
\]

Note that there's a canonical morphism \(R^kg_{\ast}f_{\ast}\omega_X\to R^kq_{\ast}\omega_X\), namely the edge map from the composed functor spectral sequence \(R^ig_{\ast}R^jf_{\ast}\omega_X\implies R^{i+j}q_{\ast}\omega_X\). By Kollár's result (Theorem \ref{thm:kollarII-3.4}), this map is an inclusion of a direct summand. Pulling this back to \(Y\), and applying twists by canonical bundles and composing with twists of \(\relgx\), yields the following morphisms.

\begin{equation*}
\begin{tikzcd}
f_{\ast}\omega_X\ar[r,"\relgx"]&\omega_{Y/Z}\otimes g^{\ast}R^kg_{\ast}f_{\ast}\omega_X \\
f_{\ast}\omega_{X/Z}\ar[r,"\relgxz"]\ar[rr,bend right=15,"\relgxzq"]&\omega_{Y/Z}\otimes g^{\ast}R^kg_{\ast}f_{\ast}\omega_{X/Z}\ar[r]&\omega_{Y/Z}\otimes g^{\ast}R^kq_{\ast}\omega_{X/Z}\\
f_{\ast}\omega_{X/Y}\ar[r,"\relgxy"]\ar[rr,bend right=15,"\relgxyq"]&g^{\ast}R^kg_{\ast}f_{\ast}\omega_{X/Z}\ar[r]&g^{\ast}R^kq_{\ast}\omega_{X/Z}
\end{tikzcd}
\end{equation*}

\begin{lemma}
For a general point \(y\in Y\), let \(\fibreXy=f^{-1}(y),\fibreXz=q^{-1}(g(y))\) and \(\fibreYz=g^{-1}(g(y))\). 
\begin{enumerate}
\item The fibre \(\relgxyq|_y\colon H^0(\fibreXy,\omega_{\fibreXy})\to H^k(\fibreXz,\omega_{\fibreXz})\) of \(\relgxyq\) is the Gysin morphism of the inclusion \(\fibreXy\subset\fibreXz\).
\item The fibre of \(\relgxy\) at \(y\) is a surjective morphism
\end{enumerate}
\[
\relgxy|_y\colon H^0(\fibreXy,\omega_{\fibreXy})\to H^k(\fibreYz,(f|_{\fibreXz})_{\ast}\omega_{\fibreXz}).
\]
\begin{enumerate}
\item Furthermore, let \(z=g(y)\) and assume \(g\) is a fibration. Then the fibre \((g_{\ast}\relgxz)|_z\) of \(g_{\ast}\relgxy\colon q_{\ast}\omega_{X/Y}\to R^kg_{\ast}f_{\ast}\omega_{X/Z}\) at \(z\) is an isomorphism for general \(y\).
\end{enumerate}
\label{lemma:fibres-of-G}
\end{lemma}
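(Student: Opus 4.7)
I plan to prove the three parts of the lemma in turn, using generic base change (Proposition~\ref{prop:generic-base-change}) to reduce each statement to a computation on a generic fibre, and then verifying the identification by tracking the Grothendieck duality adjunctions used to define $\relgx$.

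For part (1), the construction of $\relgx$ as the $(\mathbf{R}g_\ast, g^!)$-adjoint of the truncation $\mathbf{R}g_\ast f_\ast\omega_X \to R^k g_\ast f_\ast\omega_X[-k]$ is compatible with flat base change. I would restrict to the fibre over $z=g(y)$ to reduce to the case where $Z$ is a point; then a further reduction along the closed embedding $i\colon \fibreXy\hookrightarrow \fibreXz$ lets me express $\relgxyq|_y$ in terms of the Grothendieck duality counit $i_\ast i^! \omega_{\fibreXz} \to \omega_{\fibreXz}$. Using the identification $i^!\omega_{\fibreXz} \cong \omega_{\fibreXy}[-k]$ for a smooth codimension-$k$ embedding, the induced map on $H^k$ is exactly the algebraic Gysin morphism, which agrees with the topological one by the standard compatibility of Grothendieck and Verdier duality.

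For part (3), my approach is to use relative Serre duality on the smooth fibration $g$. Writing $f_\ast\omega_{X/Z} = f_\ast\omega_{X/Y}\otimes\omega_{Y/Z}$, relative Serre duality identifies $R^k g_\ast f_\ast\omega_{X/Z}$ with the $\mathcal{O}_Z$-linear dual of $g_\ast(f_\ast\omega_{X/Y})^\vee$. I would then verify that, under this identification, the morphism $g_\ast\relgxy\colon q_\ast\omega_{X/Y} \to R^k g_\ast f_\ast\omega_{X/Z}$ becomes the natural pairing map induced by the evaluation $(f_\ast\omega_{X/Y})\otimes(f_\ast\omega_{X/Y})^\vee \to \mathcal{O}_Y$. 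For a general $z$, the restriction of $f_\ast\omega_{X/Y}$ to $\fibreYz$ is locally free on the smooth locus of $f|_{\fibreXz}$ by Theorem~\ref{thm:kollarII-2.6}, so the induced pairing on global sections is non-degenerate and $(g_\ast\relgxy)|_z$ is an isomorphism.

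Part (2) then follows formally from part (3): restricting $\relgxy$ to $\fibreYz$ yields a sheaf map $(f|_{\fibreXz})_\ast\omega_{\fibreXz/\fibreYz} \to H^k(\fibreYz,(f|_{\fibreXz})_\ast\omega_{\fibreXz})\otimes_{\mathbb{C}}\mathcal{O}_{\fibreYz}$ into a trivial vector bundle, and its induced map on global sections is precisely $(g_\ast\relgxy)|_z$. Since this is an isomorphism and the target is globally generated by constant sections, for any $\eta$ in the target I can find a global section on $\fibreYz$ whose value at $y$ maps to $\eta$, giving surjectivity of $\relgxy|_y$. The hardest part will be part (3): one must carefully verify that, through the Grothendieck duality identifications, $g_\ast\relgxy$ really is the natural pairing map, and establish the non-degeneracy of the resulting pairing on generic global sections.
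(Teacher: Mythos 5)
Your plan for part (1) is reasonable and amounts to a repackaging of the paper's argument: the paper also reduces to \(Z\) a point by generic base change and then dualizes, identifying the Serre dual of \(\relgxyq|_y\) with the restriction map \(H^d(X,\mathcal{O}_X)\to H^0(Y,R^df_{\ast}\mathcal{O}_X)\to H^d(\fibreXy,\mathcal{O}_{\fibreXy})\). The point you should not gloss over is the compatibility of the \((\mathbf{R}g_{\ast},g^!)\)-adjunction with restriction to the fibre at \(y\) and with the adjunction for the inclusion \(i\colon\fibreXy\hookrightarrow\fibreXz\); that intertwining is where the actual computation lives (the paper does it by writing out a locally free resolution and applying the duality functor term by term).

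Parts (2) and (3) as you propose them have a genuine gap. After relative Serre duality, your argument for (3) comes down to the claim that the evaluation pairing \(H^0(\fibreYz,\sh E)\otimes H^0(\fibreYz,\sh E^{\vee})\to H^0(\fibreYz,\mathcal{O}_{\fibreYz})=\mathbb{C}\), with \(\sh E=(f|_{\fibreXz})_{\ast}\omega_{\fibreXz/\fibreYz}\), is non-degenerate, and the only justification you offer is local freeness of \(\sh E\). Local freeness does not imply this: for \(\sh E=L\oplus L^{-1}\) on an elliptic curve with \(L=\mathcal{O}(p)\), both \(H^0(\sh E)\) and \(H^0(\sh E^{\vee})\) are one-dimensional but the induced pairing on global sections is identically zero. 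The non-degeneracy here is a Hodge-theoretic fact, and it is precisely the content of the lemma. The paper establishes it in two steps: first, the restriction \(H^0(Y,R^df_{\ast}\mathcal{O}_X)\to H^d(\fibreXy,\mathcal{O}_{\fibreXy})\) is injective, because every global section lifts to \(H^d(X,\mathcal{O}_X)\) (degeneration of the Leray spectral sequence, via Kollár's splitting) and therefore corresponds to a flat section of the local system \(R^df^0_{\ast}\mathbb{C}_{X^0}\), which vanishes identically if it vanishes at one point; second, by the global invariant cycles theorem the image is exactly the monodromy-invariant subspace \(H^d(\fibreXy,\mathcal{O}_{\fibreXy})^{\pi_1(Y^0)}\), on which the pairing against \(H^0(\fibreXy,\omega_{\fibreXy})^{\pi_1(Y^0)}\) is perfect because the invariants form a polarized sub-Hodge structure. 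None of these inputs (degeneration of Leray, flatness/monodromy, global invariant cycles) appear in your proposal, and the sentence "establish the non-degeneracy of the resulting pairing" is exactly the statement left unproved. Your derivation of (2) from (3) is fine as a logical reduction, but it inherits this gap; note also that (2) is asserted without the fibration hypothesis of (3), so you would additionally need to work component by component on \(\fibreYz\).
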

The notation can be summarized in the following commuting diagram, where all squares are cartesian.

\begin{equation*}
\begin{tikzcd}
\fibreXy\ar[r,hook]\ar[d]&\fibreXz\ar[r,hook]\ar[d]&X\ar[d]\\
\{y\}\ar[r,hook]&\fibreYz\ar[r,hook]\ar[d]&Y\ar[d]\\
&\{z\}\ar[r,hook]&Z
\end{tikzcd}
\end{equation*}

\begin{proof}
\proofstep{Identifying fibres of \(\relgxyq\) with Gysin morphisms.}

By generic base change (Proposition \ref{prop:generic-base-change}), we can assume \(Z\) is a point, so \(\fibreXz=X,\fibreYz=Y\) and \(\fibreXy\) is a general fibre of \(f\). Then \(\dim Y=k,\dim X=d+k\), and \(\dim \fibreXy=d\). 

Taking a log resolution of \(Y\), we can furthermore assume that the the discriminant locus of \(f\) is normal crossings. By Kollár's result (Theorem \ref{thm:kollarII-2.6}) all the higher direct images \(R^if_{\ast}\omega_X\) are then locally free. At a general \(y\in Y\), we get a sequence 
\[
H^0(\fibreXy,\omega_{\fibreXy})\to H^k(Y,f_{\ast}\omega_X)\to H^k(X,\omega_X),
\]
and the linear dual is a sequence 
\[
H^d(X,\mathcal{O}_X)\to H^0(Y,R^df_{\ast}\mathcal{O}_X)\to H^d(\fibreXy,\mathcal{O}_{\fibreXy})
\]
by Serre duality on the spaces involved. We claim that the first morphism is the edge map from the second page of the Leray spectral sequence for \(\mathcal{O}_X\), the second the base change morphism to the fibre, and the composition as a result the restriction to a fibre.

     Let 
\[
D_Y(-)=\mathbf{R}\shom(-,\omega_Y[\dim Y])
\]
be the Serre duality functor for \(Y\) and similarly \(D_X\) for \(X\). Then 
\[
D_Y(\mathbf{R}f_{\ast}\omega_X)=\mathbf{R}f_{\ast}\mathcal{O}_X[\dim X],
\]
and since the discriminant locus of \(f\) is a normal crossings divisor, the \(R^if_{\ast}\omega_X\) are locally free and 
\[
D_Y(R^if_{\ast}\omega_X[-i])=R^{d-i}f_{\ast}\mathcal{O}_X[i+\dim Y],
\]
recalling that \(d=\dim X-\dim Y\). 

     To compute the dual of the fibre 
\[
\relgxy|_y\colon H^0(\fibreXy,\omega_{\fibreXy})\to H^k(Y,f_{\ast}\omega_X),
\]
we can apply \(\shom(-,\mathcal{O}_Y)\) and compute fibres of the resulting morphism, since the sheaves involved are locally free. But since \(\relgxy=\relgx\otimes\omega_Y^{-1}\) we have 
\[
\shom(\relgxy,\mathcal{O}_Y)=D_Y(\relgx)[-\dim Y].
\]

     Now the morphism \(\relgx\) is constructed as the composition
\[
f_{\ast}\omega_X\to g^{!}\mathbf{R}g_{\ast}f_{\ast}\omega_X\to g^{!}R^kg_{\ast}f_{\ast}\omega_X[-k]
\]
where the first morphism is the unit of adjunction, and the second is the projection to the highest cohomology sheaf. The (Serre) dual of the former is the counit of adjunction 
\[
g^{\ast}\mathbf{R}g_{\ast}R^df_{\ast}\mathcal{O}_X\to R^df_{\ast}\mathcal{O}_X,
\]
while the dual of the latter is the inclusion 
\[
g_{\ast}R^df_{\ast}\mathcal{O}_X\to \mathbf{R}g_{\ast}R^df_{\ast}\mathcal{O}_X
\]
of the lowest direct image along \(g\).  The composition is thus just the counit of the non-derived adjunction, namely \(g^{\ast}g_{\ast}R^df_{\ast}\mathcal{O}_X\to R^df_{\ast}\mathcal{O}_X\). Since \(Z\) is a point, \(g^{\ast}g_{\ast}R^df_{\ast}\mathcal{O}_X=H^0(Y,R^df_{\ast}\mathcal{O}_X)\otimes \mathcal{O}_Y\), and the fibre at a general \(y\in Y\) is just given by restricting a global section to the fibre over \(y\), so \(H^0(Y,R^df_{\ast}\mathcal{O}_X)\to H^d(\fibreXy,\mathcal{O}_{\fibreXy})\) in the sequence above is given as claimed.

     The morphism 
\[
H^k(Y,f_{\ast}\omega_X)\to H^k(X,\omega_X)
\]
is an edge map in the Leray spectral sequence for \(\omega_X\) with respect to \(f\), induced by the canonical map \(f_{\ast}\omega_X\to \mathbf{R}f_{\ast}\omega_X\). The edge map
\[
H^d(X,\mathcal{O}_X)\to H^0(Y,R^df_{\ast}\mathcal{O}_X)
\]
in the Leray spectral sequence for \(\mathcal{O}_X\) is similarly induced by the projection \(\mathbf{R}f_{\ast}\mathcal{O}_X\to R^df_{\ast}\mathcal{O}_X[-d]\) to the highest direct image (the ones in degree \(>d\) vanishing by duality). We claim that these maps get identified under \(D_Y\).

Let \(K^0\xrightarrow{d}K^1\to\cdots\) be any locally free resolution of \(\mathbf{R}f_{\ast}\omega_X\). Then the inclusion \(f_{\ast}\omega_X\to \mathbf{R}f_{\ast}\omega_X\) is canonically identified with the inclusion \(\ker d\to K^0\). Applying \(D_Z\) (and dropping the index shifts from the notation) gives a resolution \(\left(\cdots\to K^{1\vee}\to K^{0\vee}\right)\otimes\omega_Y\) of \(\mathbf{R}f_{\ast}\mathcal{O}_X\). The inclusion \(\ker d\to K^0\) gets mapped under \(D_Y\) to the surjection \(\left(K^{0\vee}\to\coker d^{\vee}\right)\otimes\omega_Y\). But this is just the canonical map \(\mathbf{R}f_{\ast}\mathcal{O}_X\to R^df_{\ast}\mathcal{O}_X[-d]\) as desired. This proves the claim that \(\relgxyq\) is the Gysin morphism on general fibres.

\proofstep{Surjectivity of general fibres of \(\relgxy\).}

Let us now show that the restriction 
\[
H^0(Y,R^df_{\ast}\mathcal{O}_X)\to H^d(\fibreXy,\mathcal{O}_{\fibreXy})
\]
is injective. Suppose that \(\alpha\in H^0(Y,R^df_{\ast}\mathcal{O}_X)\) vanishes when restricted to some point \(y_0\) in the smooth locus \(Y^0\subset Y\) of \(f\). Lift \(\alpha\) to an element \(\tilde{\alpha}\) of \(H^d(X,\mathcal{O}_X)\), and let \(X^0=f^{-1}(Y^0)\) and \(f^0\colon X^0\to Y^0\) be the restriction of \(f\). Since \(f^0\) is smooth, \(R^df^0_{\ast}\mathbb{C}_{X^0}\) is a local system, and the Leray spectral sequence for \(\mathbb{C}_{X^0}\) with respect to \(f^0\) degenerates, we get a map 
\[
\pi\colon H^d(X,\mathbb{C}_X)\to H^0(Y^0,R^df^0_{\ast}\mathbb{C}_{X^0}).
\]
Furthermore, the fibre of \(R^df^0_{\ast}\mathbb{C}_{X^0}\) at \(y\in Y^0\) is exactly \(H^d(\fibreXy,\mathbb{C})\). Applying the Hodge decomposition for \(X\) and \(\fibreXy\), the restriction of \(\pi(\tilde{\alpha})\) to \(y\) equals the restriction of \(\alpha\) to \(y\), which vanishes for \(y=y_0\). But \(\pi(\tilde{\alpha})\) is then a section of a local system which vanishes at a point, and since \(Y^0\) is connected, \(\pi(\hat{\alpha})\) must thus be identically \(0\). It follows that \(\alpha\) vanishes at every \(y\in Y^0\). As \(R^dy_{\ast}\mathcal{O}_X\) is locally free, and since \(\alpha\) vanishes on a dense open set, we get \(\alpha=0\). This dually gives the desired surjectivity.

\proofstep{Surjectivity of general fibres of \(g_\ast\relgxy\).}

For the final statement of the lemma, consider the monodromy action of \(\pi_1(Y^0)\) on \(H^d(\fibreXy,\mathbb{C})\), and the subspace \(H^d(\fibreXy,\mathbb{C})^{\pi_1(Y^0)}\) of invariants under this action. 

Define \(H^0(\fibreXy,\omega_{\fibreXy})^{\pi_1(Y^0)}\) as the preimage of \(H^{d}(\fibreXy,\mathbb{C})^{\pi_1(Y^0)}\) under the inclusion \(H^0(\fibreXy,\omega_{\fibreXy})\hookrightarrow H^d(\fibreXy,\mathbb{C})\). Note that \(\pi_1(Y^0)\) does not act on \(H^0(\fibreXy,\omega_{\fibreXy})\); we are considering invariants under the action on the larger space \(H^d(\fibreXy,\mathbb{C})\).

Consider the following commuting diagram.

\begin{equation*}
\begin{tikzcd}
  H^0(Y,f_{\ast}\omega_{X/Y})\ar[r,"H^0(\relgxy)"]\ar[d]&H^k(Y,f_{\ast}\omega_X)\\
  H^0(\fibreXy,\omega_{\fibreXy})\ar[ur,swap,"\relgxy|_y"]\ar[r,symbol=\supseteq]&H^0(\fibreXy,\omega_{\fibreXy})^{\pi_1(Y^0)}\ar[u]
\end{tikzcd}
\end{equation*}
We claim that the right side vertical map is an isomorphism, while the image of the restriction morphism \(H^0(Y,f_{\ast}\omega_{X/Y})\to H^0(\fibreXy,\omega_{\fibreXy})\) contains \(H^0(\fibreXy,\omega_{\fibreXy})^{\pi_1(Y^0)}\); this would yield the desired surjectivity.

Define dually \(H^d(\fibreXy,\mathcal{O}_{\fibreXy})^{\pi_1(Y^0)}\subset H^d(\fibreXy,\mathcal{O}_{\fibreXy})\) as the image of \(H^d(\fibreXy,\mathbb{C})^{\pi_1(Y^0)}\) under the canonical projection \(H^d(\fibreXy,\mathbb{C})\to H^d(\fibreXy,\mathcal{O}_{\fibreXy})\). Note again that \(\pi_1(Y^0)\) does not act on \(H^d(\fibreXy,\mathcal{O}_\fibreXy)\) by itself, only on the larger \(H^d(\fibreXy,\mathbb{C})\).

Then \(H^d(\fibreXy,\mathcal{O}_{\fibreXy})^{\pi_1(Y^0)}\) is exactly the image of the restriction morphism 
\[
H^0(Y,R^df_{\ast}\mathcal{O}_X)\to H^d(\fibreXy,\mathcal{O}_{\fibreXy}).
\]
Indeed the image of the restriction map \(H^d(X,\mathcal{O}_X)\to H^d(\fibreXy,\mathcal{O}_\fibreXy)\) is exactly \(H^d(\fibreXy,\mathcal{O}_{\fibreXy})^{\pi_1(Y^0)}\) by the global invariant cycles theorem and the fact that the restriction map in singular cohomology is a morphism of Hodge structures, and the coherent restriction map factors through \(H^0(Y,R^df_{\ast}\mathcal{O}_X)\).

     We conclude that the restriction morphism gives an isomorphism 
\[
H^0(Y,R^df_{\ast}\mathcal{O}_X)\xrightarrow{\sim} H^d(\fibreXy,\mathcal{O}_{\fibreXy})^{\pi_1(Y^0)}
\]
by the injectivity from the previous step of this proof. Since \(H^d(\fibreXy,\mathcal{O}_\fibreXy)^{\pi_1(Y^0)}\) and \(H^0(\fibreXy,\omega_{\fibreXy})^{\pi_1(Y^0)}\) are dual, we conclude that the restriction of \(\relgxy|_y\) to \(H^0(\fibreXy,\omega_{\fibreXy})^{\pi_1(Y^0)}\) is an isomorphism.

     Finally, observe that by the global invariant cycles theorem, the image of the restriction \(H^0(X,\Omega_X^d)\to H^0(\fibreXy,\omega_{\fibreXy})\) is exactly \(H^0(\fibreXy,\omega_{\fibreXy})^{\pi_1(Y^0)}\), and that this restriction factors through \(H^0(Y,f_{\ast}\omega_{X/Y})\); in fact 
\[
H^0(Y,f_{\ast}\omega_{X/Y})\to H^0(F,\omega_F)^{\pi_1(Y^0)}
\]
is an isomorphism by the same type of argument as in step 2. It follows that \(H^0(\relgxy)\colon H^0(Y,f_{\ast}\omega_{X/Y})\to H^k(Y,f_{\ast}\omega_X)\) is an isomorphism as desired.
\end{proof}

The case where \(Z\) is a point immediately gives the following.
\begin{corollary}
Suppose \(f\colon X\to Y\) is a surjective morphism of relative dimension \(k\) between smooth projective varieties. If \(H^k(Y,f_{\ast}\omega_X)\ne 0\) then \(\omega_{X/Y}\) is effective.
\label{cor:effective-relative-canonical}
\end{corollary}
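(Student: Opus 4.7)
The plan is to deduce the corollary directly from Lemma~\ref{lemma:fibres-of-G} by taking $Z$ to be a single reduced point. In this setting, $g\colon Y\to Z$ is tautologically a fibration: its unique fibre is $Y$, which is connected. Consequently, the last (``furthermore'') assertion of Lemma~\ref{lemma:fibres-of-G} applies, and this is the only part of the lemma that I will invoke.

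When $Z$ is a point, the functor $R^kg_{\ast}$ collapses to $H^k(Y,-)$, and $q_{\ast}=(g\circ f)_{\ast}$ collapses to $H^0(X,-)$. Unpacking the last assertion of the lemma under this specialization, the map $g_{\ast}\relgxy$ takes the form
\[
H^0(X,\omega_{X/Y})\longrightarrow H^k(Y,f_{\ast}\omega_X),
\]
and the lemma declares this morphism to be an isomorphism. The qualifier ``for general $y$'' is harmless here: the fibre $(g_{\ast}\relgxz)|_z$ is taken at the unique point $z\in Z$, and the genericity plays no real role once $Z$ is a point, as the only use of it in the lemma's statement is to ensure generic base change on $Z$, which is trivially satisfied.

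Once this isomorphism is in hand, the corollary is immediate: the hypothesis $H^k(Y,f_{\ast}\omega_X)\neq 0$ forces $H^0(X,\omega_{X/Y})\neq 0$, exhibiting a non-zero global section of the line bundle $\omega_{X/Y}$ and hence its effectivity. I do not anticipate a serious obstacle at any point; the substance is entirely contained in the proof of Lemma~\ref{lemma:fibres-of-G}, and the proof of this corollary consists simply of translating the lemma's conclusion through the identifications of pushforward to a point with taking global sections.
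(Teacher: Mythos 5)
Your proof is correct and is exactly the paper's argument: the corollary is stated there as an immediate consequence of Lemma~\ref{lemma:fibres-of-G} with \(Z\) a point, and the content you invoke is precisely the ``furthermore'' clause, which in that case identifies \(H^0(X,\omega_{X/Y})\to H^k(Y,f_{\ast}\omega_X)\) as an isomorphism. You are also right that the surjectivity statement alone would not suffice (it only controls sections on a general fibre), so singling out the third assertion is the correct reading.
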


\subsection{Morphism of VHS}
\label{sec:org427ae75}
 Taking the direct image of \(\relgxzq\) along \(g\) yields 
\[
g_{\ast}\relgxz\colon q_{\ast}\omega_{X/Z}\to g_{\ast}\omega_{Y/Z}\otimes R^kq_{\ast}\omega_{X/Z}.
\]
The goal is to recover this morphism from a map of VHS, at least over the locus where \(g\) and \(q\) are smooth.

Let \(Z^0\subset Z\) be a Zariski-open subset over which \(q\) and \(g\) are smooth, and let \(q^0\colon X^0\to Z^0\) and \(g^0\colon Y^0\to Z^0\) be the corresponding restrictions. Let \(d=\dim X-\dim Y\). We construct a morphism of VHS \(R^kg^0_{\ast}\mathbb{Q}_{Y^0}\otimes R^dq^0_{\ast}\mathbb{Q}_{X^0}\to R^{d+k}q^0_{\ast}\mathbb{Q}_{X^0}\) as follows. A section of \(R^kg^0_{\ast}\mathbb{Q}_{Z^0}\) is locally a cohomology class \(\alpha\in H^k(g^{-1}(U),\mathbb{Q})\) and a section of \(R^dq^0_{\ast}\mathbb{Q}_X\) is locally a class \(\beta\in H^d(q^{-1}(U),\mathbb{Q})\) for small open \(U\subset Z\). Thus we get an element \(f^{\ast}\alpha\wedge\beta\in H^{d+k}(q^{-1}(U),\mathbb{Q})\), which defines a local section of \(R^{d+k}q^0_{\ast}\mathbb{Q}_{X^0}\). As this is compatible with the Hodge filtrations, we get a morphism of VHS.

As \(q^0\) and \(g^0\) are smooth, dualizing gives the desired map \(\relgvhs\colon R^{d+k}q^0_{\ast}\mathbb{Q}_{X^0}\to R^kg^0_{\ast}\mathbb{Q}_{Y^0}\otimes R^{d+2k}q^0_{\ast}\mathbb{Q}_{X^0}\).

\begin{lemma}
Suppose \(g\) is smooth and \(\omega_{Y/Z}\) is trivial. On the lowest graded piece of the Hodge filtration, the morphism
\begin{align*}
R^{d+k}q^0_{\ast}\mathbb{Q}_{X^0}\otimes \mathcal{O}_{Z^0}\to R^kg^0_{\ast}\mathbb{Q}_{Y^0}\otimes R^{d+2k}q^0_{\ast}\mathbb{Q}_{X^0}\otimes \mathcal{O}_{Z^0}
\end{align*}
induced by \(\relgvhs\) agrees with the restriction to \(Z^0\) of 
\begin{align*}
g_{\ast}\relgxzq\colon q_{\ast}\omega_{X/Z}\to g_{\ast}\omega_{Y/Z}\otimes R^kq_{\ast}\omega_{X/Z}.
\end{align*}
\label{lemma:coherent-and-vhs-gysin-maps-agree}
\end{lemma}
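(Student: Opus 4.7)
The plan is to reduce the statement to a pointwise comparison at each $z \in Z^0$, and then identify both maps with a common description using Lemma \ref{lemma:fibres-of-G}.

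First, both morphisms are between locally free sheaves on $Z^0$: by Theorem \ref{thm:kollarII-2.6} applied to the smooth morphisms $q^0$ and $g^0$, the sheaves $q^0_\ast \omega_{X^0/Z^0}$, $g^0_\ast \omega_{Y^0/Z^0}$, and $R^kq^0_\ast \omega_{X^0/Z^0}$ are locally free (and likewise for the VHS-interpretation sheaves). Since a morphism of locally free sheaves on a variety is determined by its induced maps on fibers at all closed points, it suffices to check equality of fibers at each $z \in Z^0$. Both constructions are natural under such base change: for $\relgvhs$ this is immediate from its topological cup-product definition, and for $g_\ast \relgxzq$ it follows from the cohomology-and-base-change implicit in Theorems \ref{thm:kollarII-2.6} and \ref{thm:kollarII-3.4}, combined with the formal construction of $\relgx$ via Grothendieck duality. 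Under base change, the fiber at $z$ of either map becomes the analogous construction applied to the triangle $\fibreXz \xrightarrow{f|_{\fibreXz}} \fibreYz \to \{z\}$, where $\fibreXz = q^{-1}(z)$ and $\fibreYz = g^{-1}(z)$. One is thus reduced to the case where $Z$ is a point.

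In this reduced case, both maps are linear maps $H^0(X, \omega_X) \to H^0(Y, \omega_Y) \otimes H^k(X, \omega_X)$. Fixing a trivialization $s \in H^0(Y, \omega_Y)$, each factors through multiplication by $s$ and a linear map $H^0(X, \omega_X) \to H^k(X, \omega_X)$. By Lemma \ref{lemma:fibres-of-G}(1), the coherent side sends $\omega$ to the coherent Gysin pushforward $\iota_{y\ast}(\omega|_{\fibreXy})$ for a general fiber $\fibreXy$. For the VHS side, unpacking the Poincaré-dual-of-cup-product definition and applying the projection formula shows the map sends $\omega$ to $f^\ast\alpha' \cup \omega$, where $\alpha' \in H^k(Y, \mathcal{O}_Y)$ is the Serre dual of $s$. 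Agreement follows from the standard identification of the coherent Gysin morphism $\iota_{y\ast}$ (for the inclusion of a smooth fiber) with cup product by the Serre-dual coherent cohomology class of the inclusion.

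The main technical obstacle is this final comparison: matching the topological cup-product-based definition of $\relgvhs$ with the Grothendieck-duality-based definition of $\relgx$ once both are restricted to the lowest Hodge pieces. The two constructions live in different languages (topological versus coherent cohomology, Poincaré versus Serre duality), and the identification amounts to carefully tracking bases through these various dualities together with the Hodge decomposition. Once the identification is made on a single fiber inclusion, the global agreement over $Y$ is immediate, and the lemma follows.
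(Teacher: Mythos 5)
Your overall strategy is the paper's: reduce by base change to the case where \(Z\) is a point, trivialize \(\omega_Y\) by a section \(\tau\), use Lemma \ref{lemma:fibres-of-G} to describe the coherent map through fibrewise Gysin morphisms, and identify the lowest Hodge piece of the dual-of-cup-product map \(\relgvhs\) with \(\gamma\mapsto\tau\otimes(f^{\ast}\alpha'\cup\gamma)\), where \(\alpha'\in H^k(Y,\mathcal O_Y)\) is Serre-dual to \(\tau\). Up to that point the argument is fine.

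The gap is in the last sentence of your second paragraph, which is where the actual content of the lemma lives. The ``standard identification of the coherent Gysin morphism with cup product by the class of the inclusion'' does not apply as stated. The projection formula gives \(\iota_{y\ast}\iota_y^{\ast}x = x\cup[\fibreXy]\) for classes \(x\) restricted from \(X\); but here the class being pushed forward is \(\psi|_{\fibreXy}\in H^0(\fibreXy,\omega_{\fibreXy})\), where \(\gamma=f^{\ast}\tau\wedge\psi\), and this is \emph{not} the restriction \(\iota_y^{\ast}\gamma\) (that restriction vanishes, since \(\iota_y^{\ast}f^{\ast}\tau=0\)). Moreover \(f^{\ast}\alpha'\) is not the fundamental class \([\fibreXy]=f^{\ast}[\mathrm{pt}]\), which has type \((k,k)\), while \(\alpha'\) has type \((0,k)\). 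So the identity you actually need, namely \(f^{\ast}\alpha'\cup\gamma=\iota_{y\ast}(\psi|_{\fibreXy})\) in \(H^k(X,\omega_X)\), is not an off-the-shelf fact. The paper proves it by pairing both sides against an arbitrary \(\alpha\otimes\beta\in H^k(Y,\mathcal O_Y)\otimes H^d(X,\mathcal O_X)\) and evaluating \(\int_Xf^{\ast}(\alpha\wedge\tau)\wedge\beta\wedge\psi\) by integration over the fibres of \(f\), using that the fibrewise Serre pairing \((\beta|_{\fibreXy},\psi|_{\fibreXy})\) is independent of \(y\) (monodromy invariance). You correctly flagged this comparison as the main obstacle, but the justification you offer for it would not go through without that computation or an equivalent one.
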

\begin{proof}
As \(q^0\) and \(g^0\) are smooth, base change applies to the direct images of the line bundles. By proper base change for the direct images of constant sheaves, it thus suffices to assume that \(Z\) is a point. Then \(g_{\ast}\relgxzq\) is just a map 
\[
H^0(X,\omega_X)\to H^0(Y,\omega_Y)\otimes H^k(X,\omega_X),
\]
and we must identify the dual 
\[
H^k(Y,\mathcal{O}_Y)\otimes H^d(X,\mathcal{O}_X)\to H^{d+k}(X,\mathcal{O}_X)
\]
with the cup product map, by definition of \(\relgvhs\).

By assumption, \(\omega_{Y/Z}=\omega_Y\) is trivial, so fix an isomorphism by choosing a non-zero \(\tau\in H^0(Y,\omega_Y)\). Suppose given 
\[
\alpha\in H^k(Y,\mathcal{O}_Y),\beta\in H^d(X,\mathcal{O}_Y),\gamma\in H^0(X,\omega_X).
\]
Since \(H^k(Y,\mathcal{O}_Y)\) is dual to \(H^0(Y,\omega_Y)\), we can assume that \(\alpha\) is dual to \(\tau\) under Serre duality. The claim is that
 \[
f^{\ast}\alpha\wedge\beta\wedge\gamma=(\alpha\otimes\beta,\relgxzq(\gamma))
\]
in \(H^{k+d}(X,\omega_X)\), where the right hand side is the Serre duality pairing of \(H^k(Y,\mathcal{O}_Y)\otimes H^d(X,\mathcal{O}_X)\) with \(H^0(Y,\omega_Y)\otimes H^k(X,\omega_X)\). To see this, note that triviality of \(\omega_Y\) implies that the natural map 
\[
H^0(Y,f_{\ast}\omega_{X/Y})\otimes H^0(Y,\omega_Y)\to H^0(Y,f_{\ast}\omega_X)=H^0(X,\omega_X)
\]
is an isomorphism. Thus there's a global section \(\psi\in H^0(Y,f_{\ast}\omega_{X/Y})\) such that \(\gamma=\tau\tensor\psi\). At least over the smooth locus of \(Y\), \(\psi\) is nothing but a holomorphic \(d\)-form on \(X\) such that \(\gamma=f^{\ast}\tau\wedge\psi\).

For general \(y\in Y\), let \(\fibreXy=f^{-1}(y)\). Then \(\relgxzq(\gamma)\) is exactly the Gysin morphism of the inclusion \(\fibreXy\subset X\) applied to the restriction \(\psi|_{\fibreXy}\), tensor \(\tau\), by Lemma \ref{lemma:fibres-of-G}. To compute, we note now that 
\begin{align*}
(\alpha\otimes\beta,G(\gamma))&=(\alpha,\tau)\cdot(\beta|_{\fibreXy},\psi|_{\fibreXy})\\
&=(\beta|_{\fibreXy},\psi|_{\fibreXy})
\end{align*}
since \(\alpha\) and \(\tau\) are dual. In particular, \((\beta|_{\fibreXy},\psi|_{\fibreXy})\) is independent of \(y\) (this is related to monodromy invariance of \(\psi|_{\fibreXy}\)). On the other hand, 
\[
f^{\ast}\alpha\wedge\beta\wedge\gamma=f^{\ast}(\alpha\wedge\tau)\wedge\beta\wedge \psi
\]
To integrate the right hand side, we integrate \(\beta\wedge\psi\) over fibres \(\fibreXy\), then integrate \(\alpha\wedge\tau\) over \(Y\); but that gives exactly the desired result.
\end{proof}

Finally, we can state and prove the following more precise version of Theorem \ref{thm:B-simple}.

\begin{theorem}
Suppose \(X\xrightarrow f Y\xrightarrow g Z\) are surjective morphisms of smooth projective varieties, and let \(q=g\circ f\). Suppose further that \(g\) is a smooth fibration and \(\omega_{Y/Z}\) is trivial. Then the morphism \(\relgx\colon g_{\ast}\omega_X\to g^{\ast}R^kg_{\ast}f_{\ast}\omega_X\) is split surjective.
\label{thm:B}
\end{theorem}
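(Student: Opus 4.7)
The plan is to reduce the question to splitting the pushforward $g_{\ast}\relgxz$ on $Z$, and to produce that splitting using the semisimplicity of polarizable variations of Hodge structure together with Kollár's canonical-extension results. The reduction uses that $g$ is a smooth fibration with trivial relative canonical bundle and connected fibres, so $g_{\ast}\mathcal{O}_Y=\mathcal{O}_Z$ and by the projection formula the unit $\id\to g_{\ast}g^{\ast}$ is an isomorphism. In particular $g^{\ast}$ is fully faithful, and a short chase with the counit of $g^{\ast}\dashv g_{\ast}$ shows that any section of $g_{\ast}\relgx\colon q_{\ast}\omega_X\to R^kg_{\ast}f_{\ast}\omega_X$ on $Z$ pulls back to a section of $\relgx$ on $Y$; after twisting by $\omega_Z^{-1}$ it suffices to split $g_{\ast}\relgxz\colon q_{\ast}\omega_{X/Z}\to R^kg_{\ast}f_{\ast}\omega_{X/Z}$.

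Fix a Zariski-open $Z^0\subset Z$, with $Z\setminus Z^0$ a normal crossings divisor, over which both $q$ and $g$ are smooth. By Lemma~\ref{lemma:coherent-and-vhs-gysin-maps-agree}, the restriction of $g_{\ast}\relgxzq$ to $Z^0$ is the lowest Hodge graded piece of the morphism of polarizable VHS $\relgvhs\colon R^{d+k}q^0_{\ast}\mathbb{Q}_{X^0}\to R^kg^0_{\ast}\mathbb{Q}_{Y^0}\otimes R^{d+2k}q^0_{\ast}\mathbb{Q}_{X^0}$ defined just before that lemma. By Lemma~\ref{lemma:fibres-of-G}(3), the map $g_{\ast}\relgxz$ is fibrewise an isomorphism at general points of $Z^0$, and since short exact sequences of VHS are strict with respect to the Hodge filtration, this forces the lowest Hodge piece of the sub-VHS $\im\relgvhs$ to agree generically—hence, being a subbundle on $Z^0$, on all of $Z^0$—with the lowest Hodge piece of the target. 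Semisimplicity of the category of polarizable VHS then produces a splitting of the surjection $R^{d+k}q^0_{\ast}\mathbb{Q}_{X^0}\twoheadrightarrow\im\relgvhs$ as morphisms of VHS on $Z^0$.

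The main technical step is extending this VHS-theoretic splitting from $Z^0$ to $Z$ and translating it back into coherent sheaves. The crucial observation is that since $g$ is smooth on all of $Z$, the local system $R^kg_{\ast}\mathbb{Q}_Y$ extends across $Z\setminus Z^0$ with trivial monodromy, and hence upper canonical extension commutes with the tensor product defining the target of $\relgvhs$ (cf.\ the remark in Section~\ref{sec:org27b505d}). Applying Theorem~\ref{thm:kollarII-2.6} to the upper canonical extensions of the VHS direct summand from the previous paragraph yields a direct summand of $q_{\ast}\omega_{X/Z}$ on $Z$ whose lowest Hodge piece realizes a section of $g_{\ast}\relgxzq$; composing with the projection onto the summand $R^kg_{\ast}f_{\ast}\omega_{X/Z}\subset R^kq_{\ast}\omega_{X/Z}$ provided by Theorem~\ref{thm:kollarII-3.4} gives the required section of $g_{\ast}\relgxz$. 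The hard part is precisely this bookkeeping: one must verify that Kollár's identification of canonical extensions with direct images of canonical bundles is compatible, under the splittings, with the morphism $g_{\ast}\relgxz$ constructed from relative Grothendieck duality, and the triviality of the monodromy of $R^kg_{\ast}\mathbb{Q}_Y$ is exactly what makes this compatibility possible.
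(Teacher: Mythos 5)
Your proposal follows essentially the same route as the paper: identify \(g_{\ast}\relgxzq\) over \(Z^0\) with a morphism of polarizable VHS via Lemma \ref{lemma:coherent-and-vhs-gysin-maps-agree}, split it by semisimplicity, extend via upper canonical extensions using Theorem \ref{thm:kollarII-2.6} and the trivial monodromy of \(R^kg_{\ast}\mathbb{Q}_Y\), pin down the image as \(R^kg_{\ast}f_{\ast}\omega_{X/Z}\) by Lemma \ref{lemma:fibres-of-G}(3), and descend the splitting through the push--pull adjunction using \(g_{\ast}\mathcal{O}_Y=\mathcal{O}_Z\). The one step you elide is that an open \(Z^0\) with normal crossings complement need not exist inside the smooth locus of \(q\) without first passing to a log resolution \(Z'\to Z\) and pushing the resulting splitting back down (as the paper does), but this is a routine reduction that your argument absorbs without change.
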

\begin{proof}
By generic base change (Proposition \ref{prop:generic-base-change}), we can fix an open \(Z^0\subset Z\) over which \(q\) is smooth and base change to fibres over \(Z\) applies to the sheaves \(f_{\ast}\omega_X\) and \(g^{\ast}R^kg_{\ast}f_{\ast}\omega_X\).

We can in fact assume that \(Z\setminus Z^0\) is a normal crossings divisor. If not, consider a log resolution \(\pi_Z\colon Z'\to Z\) of \(Z\setminus Z^0\); by pullback we get the following diagram, where the vertical maps are birational.

\begin{equation*}
\begin{tikzcd}
X'\ar[r,"f'"]\ar[d,"\pi_X"]&Y'\ar[r,"g'"]\ar[d,"\pi_Y"]&Z'\ar[d,"\pi_Z"]\\
X\ar[r,"f"]&Y\ar[r,"g"]&Z
\end{tikzcd}
\end{equation*}

Assuming \(f'_{\ast}\omega_{X'}\cong g'^{\ast}R^kg'_{\ast}f'_{\ast}\omega_{X'}\oplus\sh Q\), we get \(f_{\ast}\omega_X\cong g^{\ast}R^kg_{\ast}f_{\ast}\omega_X\oplus\pi_{Y\ast}\sh Q\). Indeed 
\begin{align*}
\pi_{Y\ast}f'_{\ast}\omega_{X'}&=f_{\ast}\pi_{X\ast}\omega_{X'}\\
&=f_{\ast}\omega_X
\end{align*}
since \(\pi_X\) is birational, and on the other hand
\begin{align*}
\pi_{Y\ast}g'^{\ast}R^kg'_{\ast}f'_{\ast}\omega_{X'}&=g^{\ast}\pi_{Z\ast}R^kg'_{\ast}f'_{\ast}\omega_{X'}\\
&=g^{\ast}R^kg_{\ast}f_{\ast}\omega_X
\end{align*}
where the first line is by flat base change along \(g\), and the second by Kollár's result (Theorem \ref{thm:kollarII-3.4}).

Assume thus that \(Z\setminus Z^0\) is a normal crossings divisor. In particular, \(q\) is smooth away from a normal crossings divisor, which implies that \(R^kq_{\ast}\omega_X\) and its direct summand \(R^kg_{\ast}f_{\ast}\omega_X\) are locally free by Kollár's result (Theorem \ref{thm:kollarII-2.6}). Consider then 
\[
g_{\ast}\relgxz\colon q_{\ast}\omega_{X/Z}\to g_{\ast}\omega_{Y/Z}\otimes R^kq_{\ast}\omega_{X/Z}
\]
which, by Lemma \ref{lemma:coherent-and-vhs-gysin-maps-agree}, is induced over \(Z^0\) by a morphism of VHS 
\[
\relgvhs\colon R^{d+k}q^0_{\ast}\mathbb{Q}_{X^0}\to R^kg^0_{\ast}\mathbb{Q}_{Y^0}\otimes R^{d+2k}q^0_{\ast}\mathbb{Q}_{X^0}.
\]
Since the category of polarizable VHS is semisimple \cite[Theorem 10.13]{Peters2008}, there is a direct sum decomposition of VHS \(R^{d+k}q^0_{\ast}\mathbb{Q}_{X^0}\cong I\oplus K\) where \(K\) is the kernel of \(\Phi\), and \(I\) maps isomorphically to the image of \(\Phi\). We also have a decomposition \(R^kg^0_{\ast}\mathbb{Q}_{Z^0}\otimes R^{d+2k}q^0_{\ast}\mathbb{Q}_{X^0}\cong I\oplus C\), and the resulting \(I\oplus K\to I\oplus C\) is just the identity map on \(I\) while vanishing on \(K\).

Again by Theorem \ref{thm:kollarII-2.6}, the lowest piece of the Hodge filtration of the upper canonical extension of \(R^{d+k}q_{\ast}^0\mathbb{Q}_{X^0}\) is exactly \(q_{\ast}\omega_{X/Z}\), while the same construction applied to \(R^kg^0_{\ast}\mathbb{Q}_{Y^0}\otimes R^{d+2k}q^0_{\ast}\mathbb{Q}_{X^0}\) yields \(g_{\ast}\omega_{Y/Z}\otimes R^kq_{\ast}\omega_{X/Z}\). Note in the latter case that \(R^kg^0_{\ast}\mathbb{Q}_{Y^0}\) has trivial monodromy around the complement of \(Z^0\), since \(g\) is smooth, so taking canonical extensions and tensor products does in fact commute in this case by the discussion in section \ref{sec:org27b505d}.

Let \(\sh I,\sh K,\sh C\) be the lowest pieces of the Hodge filtration on the upper canonical extensions of \(I,K\) and \(C\) respectively. The formation of canonical extensions is compatible with direct sums, so \(q_{\ast}\omega_{X/Z}\cong\sh I\oplus \sh K\). By Lemma \ref{lemma:coherent-and-vhs-gysin-maps-agree}, the image of \(g_{\ast}\relgxzq\) inside \(g_{\ast}\omega_{Y/Z}\otimes R^kq_{\ast}\omega_{X/Z}\) agrees with \(\sh I\) over \(Z^0\). Since all sheaves involved are locally free, it follows that \(\sh I\) is in fact the image of \(g_{\ast}\relgxzq\).

Back on \(Y\), the push-pull adjunction for \(g\) applied to \(\relgxzq\), together with the projection formula, gives the following commuting diagram.

\begin{equation*}
     \begin{tikzcd}
     g^{\ast}q_{\ast}\omega_{X/Z}\ar[r,"g^{\ast}g_{\ast}\relgxzq"]\ar[d]& g^{\ast}g_{\ast}\omega_{Y/Z}\otimes g^{\ast}R^kq_{\ast}\omega_{X/Z}\ar[d]\\
     f_{\ast}\omega_{X/Z}\ar[r,"\relgxzq"]&\omega_{Y/Z}\otimes g^{\ast}R^kq_{\ast}\omega_{X/Z}
     \end{tikzcd}
\end{equation*}

As \(\omega_{Y/Z}\) is trivial and \(g\) is a fibration, the right side vertical map is an isomorphism. Moreover, \(g^{\ast}\sh I\) is a direct summand of both the top left and bottom right corners, and the composition through \(f_{\ast}\omega_{X/Z}\), when restricted to \(g^{\ast}\sh I\), is the identity. It thus remain only to show that \(\sh I=R^kg_{\ast}f_{\ast}\omega_{X/Z}\), as it would then follow that \(g^\ast\sh I=g^\ast R^k g_\ast f_\ast\omega_{X/Z}\) is the image of \(\relgxzq\), hence also of \(\relgxz\), and the previous diagram yields a splitting of \(\relgxz\) as desired.

Thus we must show that \(\relgxz\) remains surjective after pushing forward to \(Z\).
On \(Z\), \(\sh I\) and \(R^kg_{\ast}f_{\ast}\omega_{X/Z}\) are both locally free subsheaves of \(R^kq_{\ast}\omega_{X/Z}\) (in fact direct summands). Thus it suffices to show that for general \(z\in Z\), the fibre of \(g_{\ast}\relgxz\) at \(z\) is surjective. By generic base change (Proposition \ref{prop:generic-base-change}) we can assume that \(Z\) is just a point, in which case we are to show that the map induced by \(\relgxz\) on global sections is surjective.

Since \(\relgxz\) and \(\relgxy\) are related by twisting by \(\omega_{Y/Z}\), fixing a trivialization of \(\omega_{Y/Z}\) identifies the two maps, so we are done by Lemma \ref{lemma:fibres-of-G}.
\end{proof}

It seems more natural to consider VHS on \(Y\) rather than \(Z\) to get the splitting, but there's a technical issue with that approach. Namely, one ends up having to take a resolution \(\pi\colon Y'\to Y\) of \(Y\) that doesn't come from a resolution of \(Z\) by pullback. One then wants to express \(\pi^{\ast}g^{\ast}R^kg_{\ast}f_{\ast}\omega_{X/Z}\) as a direct summand of the canonical extension of a VHS pulled back from an open subset of \(Z\), with the hope of splitting \(\relgxy\). While \(g\) is smooth, so functoriality of canonical extensions is not an issue there, the composition \(\pi\circ g\) is not smooth, so it's not clear what the canonical extension on \(Y'\) gives. This functoriality issue can be fixed by appealing to Hodge modules, which would give a proof along these lines even without the assumptions on \(g\) and \(\omega_{Y/Z}\).

\subsection{Effectiveness of relative canonical bundles and fibres of the Albanese morphism}
\label{sec:orgf247e64}
Corollary \ref{cor:effective-relative-canonical} can be used to give a variation of a proof of a theorem by Jiang. For a smooth projective variety \(X\), let \(P_n=h^0(X,\omega_X^n)\) denote the plurigenera of \(X\).

\begin{theorem}[{\cite[Theorem 3.1]{Jiang2009}}]
Suppose \(X\) be a smooth projective variety with \(P_1(X)=P_2(X)=1\). Then the fibres of the Albanese mapping are connected.
\end{theorem}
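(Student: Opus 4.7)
I would argue by contradiction: suppose the Albanese map \(a\colon X\to A=\mathrm{Alb}(X)\) has disconnected fibres. Form the Stein factorization \(X\xrightarrow{f}Y\xrightarrow{h}a(X)\subset A\), and after resolving \(Y\) and then \(X\) (which does not affect \(P_1\) or \(P_2\), being birational invariants) assume that \(X,Y\) are smooth projective, \(f\) has connected fibres, and \(h\) is generically finite of degree \(d\geq 2\). Set \(k=\dim X-\dim Y\).

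The first step is to invoke Corollary \ref{cor:effective-relative-canonical}, for which I need \(H^k(Y,f_\ast\omega_X)\neq 0\). Serre duality gives \(h^k(X,\omega_X)=h^{\dim Y}(X,\mathcal{O}_X)\), and the latter is nonzero because the Albanese pulls back a nonvanishing top form on \(a(X)\) of dimension \(\dim Y\). Kollár's decomposition (Theorem \ref{thm:kollarII-3.4}) then splits \(H^k(X,\omega_X)\) as \(\bigoplus_{p+q=k}H^p(Y,R^qf_\ast\omega_X)\); since \(f\) has connected fibres of dimension \(k\), the top sheaf \(R^kf_\ast\omega_X\) is generically \(\omega_Y\), and a careful analysis of the remaining summands should localize the nonvanishing in the \(H^k(Y,f_\ast\omega_X)\) piece. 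The corollary then produces a nonzero \(s\in H^0(X,\omega_{X/Y})\).

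The second step transfers the hypotheses to \(Y\) and inducts on dimension. The effective section \(s\) gives inclusions \(\mathcal{O}_Y\hookrightarrow f_\ast\omega_{X/Y}^{\otimes i}\), whence \(h^0(Y,\omega_Y^{\otimes i})\leq h^0(X,\omega_X^{\otimes i})=1\) for \(i=1,2\). Since \(h\) is generically finite onto a subvariety of the abelian variety \(A\), the ramification formula (together with a partial resolution of \(a(X)\) if needed) forces \(\omega_Y\) to be effective, so \(P_1(Y)=P_2(Y)=1\). Provided \(k\geq 1\), \(\dim Y<\dim X\), and the inductive hypothesis yields that the Albanese of \(Y\) has connected fibres; combined with the universal property, which factors \(h\) as \(Y\to\mathrm{Alb}(Y)\to A\), this forces \(d=1\), a contradiction. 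The case \(k=0\) (where \(f\) is birational and dimension reduction fails) is handled separately by working directly with \(Y\): there one has a smooth projective variety admitting a degree-\(d\) finite map to an abelian variety with \(P_1=P_2=1\), and a direct argument using a torsion element of \(\ker h^\ast\subset\dualab A\) together with the Chen-Jiang decomposition of \(f_\ast\omega_X^{\otimes 2}\) should produce an independent second pluricanonical section.

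The main obstacle I anticipate is the cohomological localization in the first step: one must genuinely confine the nonvanishing of \(H^k(X,\omega_X)\) to the correct summand \(H^k(Y,f_\ast\omega_X)\), rather than having it leak into the intermediate pieces \(H^p(Y,R^qf_\ast\omega_X)\) with \(0<q<k\). This should be accessible using the semisimplicity of polarizable VHS recalled in Section \ref{sec:org27b505d}, but requires careful bookkeeping. A secondary difficulty is the \(k=0\) base case, where the inductive strategy collapses and one must build the extra pluricanonical section by hand from the structure of the cover \(h\).
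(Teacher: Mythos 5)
Your overall strategy (contradiction plus induction on dimension) diverges from the paper's, which is much shorter: it quotes Hacon's theorem that \(a_X\) is surjective, reduces via the Stein factorization \(X\xrightarrow{g}V\xrightarrow{b}\operatorname{Alb}(X)\) to showing \(P_1(V)=P_2(V)=1\), and then concludes by citing the Chen--Hacon theorem that such a \(V\) has \emph{birational} Albanese map, so \(b\) is birational. The only new input is effectivity of \(\omega_{X/V}\), obtained from Corollary \ref{cor:effective-relative-canonical} together with the Ein--Lazarsfeld observation that \(P_1(X)=P_2(X)=1\) forces \(H^{g}(\operatorname{Alb}(X),a_{X\ast}\omega_X)\neq 0\) with \(g=\dim\operatorname{Alb}(X)=\dim V\); this is where \(P_2=1\) enters. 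Your Step 1 is the critical gap. First, you aim at the wrong cohomology group: as the effectivity criterion comes out of Lemma \ref{lemma:fibres-of-G} with \(Z\) a point (and as the paper actually applies it), the hypothesis needed is \(H^{\dim Y}(Y,f_\ast\omega_X)\neq 0\), not \(H^{\dim X-\dim Y}(Y,f_\ast\omega_X)\neq 0\); the latter group vanishes for trivial reasons whenever \(\dim X-\dim Y>\dim Y\), e.g.\ for a threefold whose Albanese image is a curve. Second, your proposed nonvanishing --- \(h^{\dim Y}(X,\mathcal O_X)\neq 0\) from pulled-back forms --- makes no use of \(P_2(X)=1\), and the statement you need is false without that hypothesis (a double cover of an abelian variety branched in an ample divisor has surjective Albanese map with disconnected fibres). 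Indeed the class you construct is pulled back from \(Y\), so it sits in the Leray summands \(H^{\dim Y}(Y,R^{\bullet}f_\ast\omega_X)\) rather than in \(H^{k}(Y,f_\ast\omega_X)\): the ``localization'' you flag as a concern is not bookkeeping but an essential failure. The missing idea is precisely the Ein--Lazarsfeld nonvanishing on \(\operatorname{Alb}(X)\), transferred to \(Y\) by Kollár's splitting (Theorem \ref{thm:kollarII-3.4}) along the generically finite map \(h\).

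Your Step 2 also does not close. Even granting \(P_1(Y)=P_2(Y)=1\) and, by induction, that \(a_Y\) has connected fibres, \(h\) is not the Albanese map of \(Y\): you only learn that \(a_Y\) is birational onto its image, and the induced map from \(a_Y(Y)\) to \(a(X)\) inside the respective abelian varieties could still have degree \(d\geq 2\) (think of an isogeny restricted to a subvariety), so no contradiction follows without further argument about \(\operatorname{Alb}(Y)\to\operatorname{Alb}(X)\). This is exactly the point at which the paper instead invokes Chen--Hacon, whose conclusion is birationality of the Albanese map of \(V\) and hence of \(b\). Finally, your \(k=0\) base case is the hardest configuration --- it is where the counterexamples without \(P_2=1\) live --- and is left essentially unaddressed. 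If you wish to avoid quoting Chen--Hacon you must supply a genuine replacement for it; as written, the proposal does not constitute a proof.
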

\begin{proof}
By \cite{Hacon2002}, it is known that the Albanese mapping \(a_X\colon X\to \operatorname{Alb}(X)\) is surjective in this case. Taking the Stein factorization and resolving singularities of the middle term (replacing \(X\) with a birational modification) yields a factorization \(X\xrightarrow g V\xrightarrow b \operatorname{Alb}(X)\) of \(a_X\) where \(b\) is generically finite. It is a theorem of Chen and Hacon \cite{Chen2001} that if, in this case, \(P_1(V)=P_2(V)=1\), then the Albanese mapping of \(V\) is birational, and as a consequence also \(b\) is birational so \(a_X\) has connected fibres. To prove the theorem it thus suffices to show that \(\omega_{X/V}\) is effective.

However, it follows from the theory of GV-sheaves, by an observation of Ein and Lazarsfeld \cite{Ein1997}, that \(H^g(\operatorname{Alb}(X),a_{X\ast}\omega_X)\ne 0\) since \(P_1(X)=P_2(X)=1\), where \(g=\dim \operatorname{Alb}(X)=\dim V\). Since \(b\) is generically finite, and by Kollár's result (Theorem \ref{thm:kollarII-3.4}), \(H^g(\operatorname{Alb}(X),a_{X\ast}\omega_X)=H^g(V,g_{\ast}\omega_X)\). Then Corollary \ref{cor:effective-relative-canonical} gives the conclusion.
\end{proof}

\sloppy
\printbibliography
\end{document}